\theoremstyle{plain}
\newtheorem{thm}{Theorem}[section]
\newtheorem{prop}[thm]{Proposition}
\newtheorem{cor}[thm]{Corollary}
\newtheorem{lem}[thm]{Lemma}
\newtheorem{rem}[thm]{Remark}
\newtheorem{p}[thm]{Problem}
\theoremstyle{definition}
\newtheorem{df}[thm]{Definition}
\newtheorem{ex}[thm]{Example}
\theoremstyle{remark}
\newtheorem*{Rmk}{Remark}
\numberwithin{equation}{section}
\newcommand{\vol}{{\rm Vol}}
\newcommand{\dist}{{\rm dist}}
\newcommand{\area}{{\rm Area}}
\newcommand{\length}{{\rm Length}}
\newcommand{\cv}{{\rm Conv}}
\newcommand{\cone}{\mbox{$\times\hspace*{-0.228cm}\times$}}
\newcommand{\md}{{\rm{d}}}
\newcommand{\hr}{{\mathbb{H}^2 \times\mathbb{R}}}
\newcommand{\tr}{{\rm{tr}}}
\newcommand{\tc}{{\rm TotalCurvature}}
\begin{document}
\title[Embeddedness of minimal submanifolds]{Embeddedness of proper minimal submanifolds\\in homogeneous spaces}
\author[Sung-Hong Min]{Sung-Hong Min}

\begin{abstract}
We prove the three embeddedness results as follows. $({\rm i})$ Let $\Gamma_{2m+1}$ be a piecewise geodesic Jordan curve with $2m+1$ vertices in
$\mathbb{R}^n$, where $m$ is an integer $\geq2$. Then the total curvature of $\Gamma_{2m+1}<2m\pi$. In particular, the total curvature of
$\Gamma_5<4\pi$ and thus any minimal surface $\Sigma \subset \mathbb{R}^n$ bounded by $\Gamma_5$ is embedded. Let $\Gamma_5$ be a piecewise geodesic
Jordan curve with $5$ vertices in $\mathbb{H}^n$. Then any minimal surface $\Sigma \subset \mathbb{H}^n$ bounded by $\Gamma_5$ is embedded. If
$\Gamma_5$ is in a geodesic ball of radius $\frac{\pi}{4}$ in $\mathbb{S}^n_+$, then $\Sigma \subset \mathbb{S}^n_+$ is also embedded. As a
consequence, $\Gamma_5$ is an unknot in $\mathbb{R}^3$, $\mathbb{H}^3$ and $\mathbb{S}^3_+$. $({\rm ii})$ Let $\Sigma$ be an $m$-dimensional proper
minimal submanifold in $\mathbb{H}^n$ with the ideal boundary $\partial_{\infty} \Sigma = \Gamma$ in the infinite sphere
$\mathbb{S}^{n-1}=\partial_\infty \mathbb{H}^n$. If the M{\"o}bius volume of $\Gamma$ $\widetilde{\vol}(\Gamma) < 2\vol(\mathbb{S}^{m-1})$,
then $\Sigma$ is embedded. If $\widetilde{\vol}(\Gamma) = 2\vol(\mathbb{S}^{m-1})$, then $\Sigma$ is embedded unless it is a cone. $({\rm iii})$ Let $\Sigma$ be a proper minimal surface in $\hr$. If $\Sigma$ is vertically regular at infinity and has two ends, then $\Sigma$ is embedded.\\

\noindent {\it Mathematics Subject Classification(2010)} : 53A10, 49Q05\\
\noindent {\it Key Words and phrases} : minimal surface, monotonicity, embeddedness, total curvature of a curve, knot
\end{abstract}

\maketitle

\section{Introduction}
To decide whether a minimal submanifold is embedded or not is one of the significant problems in minimal surface theory. The first well-known
embeddedness theorem is given by Rad{\' o} \cite{Rado2}. He proved that if a Jordan curve $\Gamma$ in $\mathbb{R}^n$ has a 1-1 projection onto the
boundary of a convex domain $D$ in a plane then any minimal surface bounded by $\Gamma$ is a graph over $D$. In the 1970's Tomi and Tromba \cite{TT}
and Almgren and Simon \cite{AS} showed that an extremal Jordan curve $\Gamma$ in $\mathbb{R}^3$ spans an embedded minimal surface. A curve is
extremal if it lies in the boundary of a convex domain. Moreover Meeks and Yau \cite{MY} proved that the Douglas-Morrey solution of the Plateau
problem is embedded under the same condition.

In $1929$, Fenchel \cite{F1} proved that the total curvature of any closed curve in $\mathbb{R}^3$ is always greater than or equal to $2\pi$ and is
equal to $2\pi$ if and only if it is a convex curve in a plane. He observed that the total curvature of a regular curve $\Gamma$ is measured by the
length of spherical image of the unit tangent vectors to $\Gamma$. F{\'a}ry \cite{Fary} and Milnor \cite{M} proved independently that the total
curvature of a knot in $\mathbb{R}^3$ is greater than $4\pi$. For minimal surfaces, it had been open for a long time whether a minimal surface
bounded by a Jordan curve with total curvature at most $4\pi$ is embedded or not.

In $2002$, Ekholm, White, and Wienholtz \cite{EWW} proved the embeddedness of any minimal surface bounded by a Jordan curve $\Gamma$ in $\mathbb{R}^n$ with total curvature at most $4\pi$. This gives a simple proof of F{\'a}ry-Milnor theorem. Choe and Gulliver \cite{CG2} generalized this result for minimal surfaces in an $n$-dimensional complete simply connected Riemannian manifold with sectional curvature bounded above by a non-positive constant and for minimal surfaces in $\mathbb{S}_+^n$. In particular, they proved that any minimal surface bounded by a Jordan curve $\Gamma$ in $\mathbb{H}^n$ ($\mathbb{S}^n_+$, resp.) with total curvature less than or equal to $4\pi+\inf_{p\in\Sigma} \area(p\cone\Gamma)$ ($4\pi-\sup_{p\in\Sigma} \area(p\cone\Gamma)$, resp.) is always embedded unless it is a cone. It follows that a Jordan curve in $\mathbb{H}^3$ ($\mathbb{S}^3_+$, resp.) with total curvature less than or equal to $4\pi+\inf_{p\in\Sigma} \area(p\cone\Gamma)$ ($4\pi-\sup_{p\in\Sigma} \area(p\cone\Gamma)$, resp.) is unknotted.\\

In this paper we will prove three embeddedness results for some proper minimal submanifolds in homogeneous manifolds. In order to obtain embeddedness
of a minimal submanifold $\Sigma$, we are going to get an estimate
\begin{equation*}
\Theta_\Sigma (p) < 2,
\end{equation*}
where $\Theta_\Sigma (p)$ is the density of $\Sigma$ at $p$.\\

In section \ref{sect:compact}, we will first deal with the following two problems.
\begin{p}
{\rm Let $\Gamma_k$ be a piecewise geodesic Jordan curve with $k$ vertices in $\mathbb{R}^n$. What is the upper bound for the total curvature of
$\Gamma_k$?}
\end{p}
If $k=3$, then Fenchel's theorem \cite{F1} implies that the total curvature of $\Gamma_3$ is equal to $2\pi$. If $k \geq 4$ is an even integer, then
it is not difficult to show that the total curvature of $\Gamma_{k}$ is less than $k\pi$ and that $k\pi$ is sharp. We will find the sharp upper bound
for the total curvature of $\Gamma_{k}$, where $k \geq 5$ is an odd integer. In case of this, $k=2m+1$ for some integer $m\geq2$. The theorem we will
prove is as follows:
\begin{displaymath}
\tc(\Gamma_{2m+1})<2m\pi.
\end{displaymath}
In particular,
\begin{displaymath}
\tc(\Gamma_{5})<4\pi,
\end{displaymath}
and thus any minimal surface $\Sigma$ bounded by $\Gamma_5$ is embedded by \cite{EWW}, and $\Gamma_5$ is an unknot in $\mathbb{R}^3$ by \cite{Fary}
and \cite{M}. This leads us to Problem \ref{prob2} in a very natural way.
\begin{p} \label{prob2}
{\rm Let $\Gamma_5$ be a piecewise geodesic Jordan curve with $5$ vertices in $\mathbb{H}^n$ or $\mathbb{S}^n_+$. If $\Sigma$ is a minimal surface
bounded by $\Gamma_5$, is $\Sigma$ embedded? Is $\Gamma_5$ an unknot in $\mathbb{H}^3$ or $\mathbb{S}^3_+$?}
\end{p}
We will prove the following. Any minimal surface $\Sigma$ bounded by $\Gamma_5$ is embedded in $\mathbb{H}^n$. In case of hemisphere
$\mathbb{S}^n_+$, if $\Gamma_5$ is a piecewise geodesic Jordan curve lying in a geodesic ball of radius $\frac{\pi}{4}$, then $\Sigma \subset
\mathbb{S}^n_+$ is also embedded. As a consequence, $\Gamma_5$ is an unknot in $\mathbb{H}^3$ and $\mathbb{S}^3_+$.

\begin{figure}[h]
\begin{center}
\includegraphics[height=5cm, width=5cm]{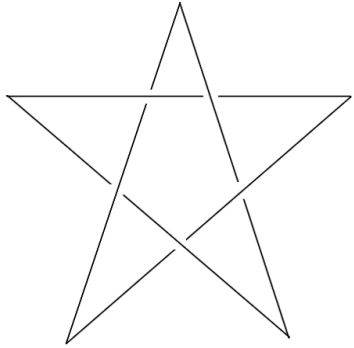}
\end{center}
\caption{Non-existence of a star-shaped knotted piecewise geodesic Jordan curve with $5$ vertices in $\mathbb{R}^3$, $\mathbb{H}^3$ and $\mathbb{S}^3_+$. (This picture cannot exist!)}
\end{figure}

Unlike \cite{CG2}, there is no need to assume that the total curvature of $\Gamma_5 \subset \mathbb{H}^n$ or $\mathbb{S}^n_+$ is bounded above by a
constant and additional terms. In fact, the total curvature and the additional term including $\area(p\cone\Gamma)$ are very difficult to compute in
$\mathbb{H}^n$ or $\mathbb{S}^n_+$. In this regard, we can see that these results are intuitive and efficient when we consider piecewise geodesic
Jordan curves.

In particular, there does not exist a star-shaped knotted piecewise geodesic Jordan curve with $5$ vertices in $\mathbb{R}^n$, $\mathbb{H}^n$ and $\mathbb{S}^n_+$ (Figure 1). Here, $n=5$ is the largest number for a piecewise geodesic Jordan curve $\Gamma_n$ with $n$ vertices to be unknotted.\\

While these results deal with the embeddedness of a compact surface, it was Schoen \cite{S} who first dealt with the embeddedness of a complete minimal surface. He proved that a complete minimal hypersurface in $\mathbb{R}^n$ which has two ends and is regular at infinity is embedded and that it is actually the catenoid. He also proved that if a minimal surface in $\mathbb{R}^3$ is regular at infinity then it has finite total curvature and thus it is proper \cite{Fang}. Levitt and Rosenberg \cite{LR} showed that a connected minimal hypersurface in $\mathbb{H}^n$ which has ideal boundary $\Gamma =S_1 \cup S_2$, where $S_1,S_2$ are disjoint round spheres in $\mathbb{S}^{n-1}=\partial_\infty \mathbb{H}^n$, and is regular at infinity is the catenoid. Here, $\Sigma$ being regular at infinity implies $\overline{\Sigma}=\Sigma \cup \Gamma$.\\

In section \ref{sect:hyperbolic}, we will prove the embeddedness of any $m$-dimensional proper minimal submanifold in $\mathbb{H}^n$ which has an
ideal boundary $\Gamma$ in the infinite sphere $\mathbb{S}^{n-1} = \partial_\infty \mathbb{H}^n$ whose M{\"{o}}bius volume is at most
$2\vol(\mathbb{S}^{m-1})$.  The M{\"{o}}bius volume $\widetilde{\vol}(\Gamma)$ of $\Gamma$ is defined to be
\begin{displaymath}
\widetilde{\vol}(\Gamma)=\sup\{{\vol}_\mathbb{R} (g(\Gamma)) \,|\,
{g \in {\text{\rm M{\"{o}}b}}(\mathbb{S}^{n-1})}\},
\end{displaymath}
where ${\vol}_\mathbb{R} (g(\Gamma))$ denotes the volume of
$g(\Gamma)$ in $\mathbb{S}^{n-1}$.

Consider a $2$-dimensional cone $0 \cone \Gamma$ in $\mathbb{R}^n$. The total geodesic curvature of $\Gamma$ is less than or equal to the length of the radial projection of $\Gamma$ onto the unit sphere, and this length is equal to $2\pi$ times the density of $0 \cone \Gamma$ at $0$, as shown in \cite{EWW} and \cite{CG2}. In view of this, we think of the M{\"{o}}bius volume or the volume in a sphere as a kind of total curvature and in this way we can obtain a suitable estimate of the density of a minimal submanifold.\\

In section \ref{sect:hr}, we will deal with minimal surfaces in $\hr$. $\hr$ is a homogeneous $3$-dimensional Riemannian manifold and is one of Thurston's eight geometry. In 2002, Rosenberg \cite{Ro} constructed infinitely many disk type minimal surfaces in $\hr$ which are graphs over ideal polygons in $\mathbb{H}^2$ by generalizing the Jenkins-Serrin type theorem to $\hr$. After that, many mathematicians have been working on the theory of minimal surfaces in $\hr$. In particular, they have constructed many minimal surfaces in $\hr$, such as the catenoid with the vertical axis of revolution, ruled minimal surfaces, the Riemann type minimal surface which is foliated by horizontal curves of constant curvature, and vertical minimal graphs over an unbounded domain in $\mathbb{H}^2$ which are not necessarily convex and not necessarily bounded by convex arcs (\cite{NR}, \cite{Haus}, \cite{ST}). Recently Pyo \cite{Pyo} constructed complete annular minimal surfaces with finite total curvature.

The last theorem we will prove is as follows: A proper minimal surface in $\hr$ which is vertically regular at infinity and has two ends is embedded.\\

We would like to mention that the problems of this paper were proposed by Jaigyoung Choe.

\section{On minimal surfaces bounded by $\Gamma_5$} \label{sect:compact}
Let $k$ be a positive integer $\geq 2$. A \textit{geodesic polygonal curve} (we will just use the term \textit{polygon} in $\mathbb{R}^n$) is a piecewise length-minimizing geodesic curve $\Gamma_k$ with $k$ vertices $v_0(=v_k), v_1, \cdots, v_{k-1}$ such that $v_{i+1} \neq v_i$ for $i=0,\cdots, {k-1}$. Denote $\Gamma_k$ by $v_0 v_1 \cdots v_{k-1}$. In particular, if $\Gamma_k$ is a closed geodesic polygonal curve, i.e. $v_k=v_0$, then we will denote it as $v_0 v_1 \cdots v_{k-1} v_0$.

\subsection{Total curvature of $\Gamma_k$ in $\mathbb{R}^n$}

Let $\Gamma_k$ be a closed polygon in $\mathbb{R}^n$ with $k$ vertices. Fenchel's theorem \cite{F1} implies that the total curvature of $\Gamma_3$ is equal to $2\pi$. If $\Gamma_k$ is simple, i.e. it is a Jordan curve, then
\begin{displaymath}
\tc(\Gamma_k) < k\pi.
\end{displaymath}
Here, $k\pi$ is sharp if $k$ is even integer. Consider a closed polygon $\widetilde{\Gamma} = v_0 v_1 \cdots v_{k-1} v_0$ where $v_{\text{even}}=v_0$ and $v_{\text{odd}}=v_1$ ($\widetilde{\Gamma}$ is a line segment as a set). We can make it a polygonal Jordan curve $\Gamma$ arbitrary close to $\widetilde{\Gamma}$ by moving vertices of $\widetilde{\Gamma}$ slightly. This implies that there is a polygonal Jordan curve $\Gamma$ of the total curvature $k\pi-\epsilon$ for small $\epsilon>0$.

The main goal of this section is to find the sharp upper bound for the total curvature of $\Gamma_k$, where $k$ is an odd integer $\geq 5$, in $\mathbb{R}^n$.

In the following lemma, we will call a closed geodesic polygonal curve with $3$ vertices a \textit{geodesic triangle}.

\begin{lem} \label{lem:gamma5}
Let $\Delta$ be a geodesic triangle $p_0 p_1 p_2 p_0$ in $\mathbb{S}^{n-1}$. Then
\begin{equation*}
\length(\Delta) \leq 2\pi.
\end{equation*}
The equality holds if and only if either $\{p_0,p_1,p_2\}$ contains antipodal points or $\Delta=p_0 p_1 p_2 p_0$ is a great circle.
\end{lem}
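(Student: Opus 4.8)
The plan is to reduce everything to the three side lengths and then exploit the antipodal map. Write $a=\dist(p_1,p_2)$, $b=\dist(p_2,p_0)$, $c=\dist(p_0,p_1)$ for the spherical geodesic distances, so that $\length(\Delta)=a+b+c$ and each of $a,b,c$ lies in $(0,\pi]$, since the three vertices are pairwise distinct and $\mathbb{S}^{n-1}$ has diameter $\pi$. The key fact I would use is the antipodal distance identity $\dist(q,-p_0)=\pi-\dist(q,p_0)$ for every $q\in\mathbb{S}^{n-1}$, where $-p_0$ denotes the antipode of $p_0$.

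For the inequality I would apply the triangle inequality to the three points $p_1,p_2,-p_0$:
\begin{equation*}
a=\dist(p_1,p_2)\le \dist(p_1,-p_0)+\dist(-p_0,p_2)=(\pi-c)+(\pi-b),
\end{equation*}
which rearranges immediately to $a+b+c\le 2\pi$, i.e.\ $\length(\Delta)\le 2\pi$. Note this argument lives entirely in $\mathbb{S}^{n-1}$ and needs no reduction to a great $2$-sphere.

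For the equality discussion, recall that equality in $\dist(p_1,p_2)\le\dist(p_1,-p_0)+\dist(-p_0,p_2)$ holds precisely when $-p_0$ lies on a minimizing geodesic from $p_1$ to $p_2$. I would then split into cases. If two of $p_0,p_1,p_2$ are antipodal, we are in the first alternative, and a direct substitution using the antipodal identity confirms $\length(\Delta)=2\pi$. Otherwise no two vertices are antipodal, so $a<\pi$ and the minimizing geodesic $\gamma$ from $p_1$ to $p_2$ is unique; the condition $-p_0\in\gamma$ then forces $p_0,p_1,p_2$ (and $-p_0$) to lie on a single great circle $C$, with $-p_0$ interior to the short arc $\gamma$ and hence $p_0$ interior to the complementary arc. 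A short coordinate computation on $C$, placing $p_1,p_2,-p_0$ at angles $0,\alpha,\beta$ with $0<\beta<\alpha\le\pi$, shows that the three minimizing arcs $p_0p_1$, $p_1p_2$, $p_2p_0$ tile $C$ exactly once, so $\Delta=C$ is a great circle. Conversely, if $\Delta$ is a great circle its length is the circumference $2\pi$, and the antipodal case was already checked, giving the reverse implication.

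The main obstacle is the equality analysis rather than the inequality itself: one must handle the non-uniqueness of the minimizing geodesic when two vertices are antipodal, so that the first alternative is genuinely needed and not subsumed by the great-circle case, and one must verify that in the non-antipodal equality case the three shortest arcs cover $C$ without overlap. It is exactly this tiling that distinguishes an honest great circle from a degenerate back-and-forth traversal, and it is what makes the two stated alternatives both necessary and, taken together, sufficient.
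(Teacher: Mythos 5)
Your proof is correct, and it takes a genuinely different route from the paper's. You get the inequality in one line by applying the triangle inequality to $p_1$, $p_2$ and the antipode $-p_0$, via the identity $\dist(q,-p_0)=\pi-\dist(q,p_0)$; equality then means $-p_0$ lies on a minimizing arc from $p_1$ to $p_2$, which, once the antipodal case is split off (so that $a,b,c<\pi$ and the minimizing arc is unique), forces all four points onto one great circle, and your tiling computation correctly verifies that the three minimizing sides cover that circle exactly once, so $\Delta$ is the great circle traversed once rather than a degenerate back-and-forth path. The paper argues quite differently: it first reduces, by an iterated hemisphere/dimension-descent argument, to the case where $p_0,p_1,p_2$ lie in an open hemisphere or in a great circle; in the hemisphere case it forms the Euclidean cone $O\cone\Delta$ over $\Delta$ with vertex at the origin, develops the flat tetrahedron $O-p_0p_1p_2$ into a plane, and concludes that the angle around $O$ --- which equals $\length(\Delta)$ --- is strictly less than $2\pi$; the great-circle and antipodal cases are treated separately. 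Your argument is shorter and arguably tighter: it needs no hemisphere reduction, avoids the somewhat informally justified development step, works in $\mathbb{S}^{n-1}$ for all $n$ at once, and makes the equality analysis completely transparent (it is the classical antipodal-point proof of the $2\pi$ perimeter bound for spherical triangles). What the paper's approach buys is thematic coherence: identifying $\length(\Delta)$ with the cone angle of $O\cone\Delta$ is exactly the cone/density viewpoint (radial projections, the angle $A^k(\Gamma,p)$) that powers the rest of the paper, so the lemma's proof there doubles as a warm-up for the later monotonicity arguments. One point you rightly flag and should keep explicit: equality in the metric triangle inequality on the sphere only says $-p_0$ lies on \emph{some} minimizing geodesic, and minimizers are non-unique precisely between antipodal points, which is why the antipodal alternative must be handled first and is genuinely not subsumed by the great-circle alternative.
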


\begin{proof}
Suppose that $\{p_0,p_1,p_2\}$ contains antipodal points. We may assume that $p_0$ and $p_1$ are antipodal. Then $p_1 p_2 p_0$ is a geodesic of length $\pi$ no matter where $p_2$ lies in. Therefore
\begin{displaymath}
\length(\Delta) = \length(p_0 p_1) + \length(p_1 p_2 p_0) = 2\pi.
\end{displaymath}
On the other hand, if any two points in $\{p_0,p_1,p_2\}$ are not antipodal, then each geodesic segment of $\Delta$ has the length $< \pi$. Observe that $p_0,p_1,p_2$ lie in a hemisphere. There are two possibilities, either $p_0,p_1,p_2$ lie in an $(n-1)$-dimensional open hemisphere or completely in $\mathbb{S}^{n-2}$. If $p_0,p_1,p_2 \in \mathbb{S}^{n-2}$, then either $p_0,p_1,p_2$ lie in an $(n-2)$-dimensional open hemisphere or in $\mathbb{S}^{n-3}$ for the same reason mentioned above. Even in the worst case, in a finite step we can conclude that $p_0,p_1,p_2$ lie in the same open hemisphere unless $p_0, p_1, p_2$ are contained in a great circle $S$.

Suppose that $p_0,p_1,p_2$ lie in the same $k$-dimensional open hemisphere. Let $O$ be the origin and $O\cone \Delta$ be the cone over $\Delta$ with the vertex $O$ in $\mathbb{R}^n$. Note that the length of $p_i p_{i+1}$ is the same as the angle between line segments $O p_i$ and $O p_{i+1}$, $i=0,1,2$. We can develop the tetrahedron $O-p_0 p_1 p_2 \subset \mathbb{R}^n$ into a plane since $O-p_0 p_1 p_2$ is flat. But to make the tetrahedron $O-p_0 p_1 p_2$ from a development drawing described in a plane, the angle around $O$ must be strictly less than $2\pi$. Therefore $\length(\Delta) < 2\pi$.

If $p_0, p_1, p_2$ are contained in a great circle $S$, then it is easy to show that $\length(\Delta) \leq \length(S) = 2\pi$. Moreover, $\length(\Delta) = \length(S)$ if and only if $\Delta=p_0 p_1 p_2 p_0$ is a great circle $S$ itself.
\end{proof}

\begin{prop} \label{prop:gamma5-1}
Let $p_0$ and $p_2$ be points in $\mathbb{S}^{n-1}$. Let $\Lambda$ be a geodesic polygonal curve $p_0 p_1 p_2$ in $\mathbb{S}^{n-1}$ which is made by adding one point $p_1$. If $\length(p_0 p_2)=\theta$, then
\begin{equation*}
\length(\Lambda) \leq 2\pi-\theta.
\end{equation*}
The equality holds if and only if either $\{p_0,p_1,p_2\}$ contains antipodal points or $p_0 p_1 p_2 p_0$ is a great circle.
\end{prop}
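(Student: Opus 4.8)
The plan is to deduce this proposition directly from Lemma \ref{lem:gamma5} by closing the open path $\Lambda = p_0 p_1 p_2$ into a geodesic triangle. First I would form $\Delta = p_0 p_1 p_2 p_0$ by appending to $\Lambda$ the length-minimizing geodesic segment $p_2 p_0$, whose length equals $\length(p_0 p_2) = \theta$. Since every segment of $\Delta$ is a length-minimizing geodesic, the three pieces contribute additively, so that
\begin{equation*}
\length(\Delta) = \length(\Lambda) + \length(p_2 p_0) = \length(\Lambda) + \theta.
\end{equation*}

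Next I would invoke Lemma \ref{lem:gamma5}, which gives $\length(\Delta) \leq 2\pi$. Substituting the identity above yields
\begin{equation*}
\length(\Lambda) = \length(\Delta) - \theta \leq 2\pi - \theta,
\end{equation*}
which is the desired inequality. For the equality case, note that $\length(\Lambda) = 2\pi - \theta$ holds precisely when $\length(\Delta) = 2\pi$; by the equality clause of Lemma \ref{lem:gamma5} this occurs if and only if $\{p_0, p_1, p_2\}$ contains antipodal points or $\Delta = p_0 p_1 p_2 p_0$ is a great circle, which matches the stated conditions.

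Since the substantive geometry --- the development of the cone $O\cone\Delta$ into a plane and the sharp constant $2\pi$ --- is already carried by the preceding lemma, there is no real obstacle here; the proposition is essentially a repackaging that converts the closed-triangle bound into a bound on an open two-segment path with a prescribed endpoint distance $\theta$. The only point requiring a word of care is the degenerate configuration $p_0 = p_2$ (i.e.\ $\theta = 0$), where $\Delta$ fails to be a genuine triangle with three distinct vertices; in that case $\Lambda = p_0 p_1 p_0$ traverses a single geodesic twice and $\length(\Lambda) = 2\length(p_0 p_1) \leq 2\pi = 2\pi - \theta$ directly, so the claim still holds.
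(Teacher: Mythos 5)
Your proposal is correct and follows essentially the same route as the paper's own proof, which likewise closes $\Lambda$ into the triangle $\Delta = p_0 p_1 p_2 p_0$, uses $\length(\Delta) = \length(\Lambda) + \length(p_2 p_0)$, and cites Lemma \ref{lem:gamma5} for both the bound and the equality case. Your extra remark on the degenerate configuration $p_0 = p_2$ is a harmless refinement (and is in fact already covered by the lemma, since $\theta = 0$ forces equality only when $p_1$ is antipodal to $p_0 = p_2$), not a different argument.
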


\begin{proof}
Let $\Delta$ be the geodesic triangle $p_0 p_1 p_2 p_0$ in $\mathbb{S}^{n-1}$. Then
\begin{equation*}
\length(p_0 p_1 p_2 p_0) = \length(\Lambda) + \length(p_2 p_0).
\end{equation*}
The conclusion follows Lemma \ref{lem:gamma5}.
\end{proof}

\begin{prop} \label{prop:gamma5-2}
Let $p_0$ and $p_3$ be points in $\mathbb{S}^{n-1}$. Let $\Lambda$ be a geodesic polygonal curve $p_0 p_1 p_2 p_3$ in $\mathbb{S}^{n-1}$ which is made by adding two points $p_1$ and $p_2$. If $\length(p_0 p_3)=\theta$, then
\begin{equation*}
\length(\Lambda) \leq 2\pi+\theta.
\end{equation*}
If we further assume that each geodesic segment of $\Lambda$ has the length $< \pi$ and $\theta< \pi$, then the equality holds if and only if $p_0 p_3 p_1 p_2 p_0$ is a great circle and $\length(p_2 p_0 p_3) <\pi$.

If $\theta=\pi$ and the equality holds, then $p_1=p_3$ is the antipodal point of $p_2=p_0$.

In general (without assumption mentioned above), $p_0, p_1, p_2, p_3$ lie in the same great circle if equality holds.
\end{prop}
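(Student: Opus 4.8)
The plan is to reduce everything to Proposition~\ref{prop:gamma5-1} together with the triangle inequality on $\mathbb{S}^{n-1}$. Writing $\phi=\length(p_0p_2)$, I would split $\Lambda$ as the two-segment curve $p_0p_1p_2$ followed by the single segment $p_2p_3$. Applying Proposition~\ref{prop:gamma5-1} to $p_0p_1p_2$ gives $\length(p_0p_1)+\length(p_1p_2)\le 2\pi-\phi$, while routing $p_2p_3$ through $p_0$ and using the triangle inequality gives $\length(p_2p_3)\le\length(p_2p_0)+\length(p_0p_3)=\phi+\theta$. Adding the two bounds, the $\phi$ cancels and we obtain $\length(\Lambda)\le 2\pi+\theta$. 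This settles the inequality, and it also shows that equality forces equality \emph{simultaneously} in Proposition~\ref{prop:gamma5-1} (call it (A)) and in the triangle inequality $\length(p_2p_3)=\length(p_2p_0)+\length(p_0p_3)$ (call it (B)). All of the subsequent work is in extracting the geometry from (A) and (B).

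For the equality analysis I would first treat the generic case in which every segment of $\Lambda$ and $\theta$ are $<\pi$. From (B) we get $\phi+\theta=\length(p_2p_3)<\pi$, hence $\phi<\pi$, so no two of $p_0,p_1,p_2$ are antipodal; thus by the equality clause of Proposition~\ref{prop:gamma5-1}, (A) forces $p_0p_1p_2p_0$ to be a great circle $S$ traversed once, so $p_0,p_1,p_2\in S$ with cyclic order $p_0,p_1,p_2$ and $\length(p_0p_1)+\length(p_1p_2)+\phi=2\pi$. Condition (B) places $p_0$ on the minimizing geodesic from $p_2$ to $p_3$; since $p_0,p_2$ are not antipodal this geodesic lies on $S$, so $p_3\in S$ as well, at distance $\theta$ from $p_0$ on the side away from $p_2$. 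A short computation using $\length(p_1p_2)<\pi$ shows $\theta<\length(p_0p_1)$, so $p_3$ falls strictly between $p_0$ and $p_1$ on $S$; hence the cyclic order is $p_0,p_3,p_1,p_2$, the curve $p_0p_3p_1p_2p_0$ is a great circle, and $\length(p_2p_0p_3)=\phi+\theta<\pi$. The converse is a direct length computation on $S$.

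Next I would dispose of the boundary case $\theta=\pi$. Since $\length(p_2p_3)\le\pi$ always, (B) forces $\phi+\pi=\length(p_2p_3)\le\pi$, hence $\phi=0$ and $p_2=p_0$. Then (A) reads $2\length(p_0p_1)=2\pi-\phi=2\pi$, so $\length(p_0p_1)=\pi$ and $p_1$ is the antipode of $p_0$; as $p_3$ is also an antipode of $p_2=p_0$ and antipodes are unique, $p_1=p_3$, which is exactly the asserted configuration.

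Finally, for the general claim I would establish coplanarity directly from (A) and (B). Condition (B) already puts $p_0,p_2,p_3$ on a common great circle $S_1$, since a point on a minimizing geodesic lies in the plane spanned by its endpoints. It then remains to place $p_1$ on $S_1$. If (A) holds via $p_0p_1p_2p_0$ being a great circle $S_2$ and $p_0,p_2$ are non-antipodal, then $S_1=S_2$ and we are done; the degenerate possibilities, namely $p_0=p_2$, or $p_0,p_2$ antipodal, or one of the pairs $p_0p_1$, $p_1p_2$ antipodal, are each handled by a reverse triangle inequality that forces $p_1$ onto a minimizing geodesic between two points of $S_1$, combined with the fact that the antipode of any point of $S_1$ again lies on $S_1$. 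I expect this last step to be the main obstacle: the equality clause of Proposition~\ref{prop:gamma5-1} branches into an antipodal alternative, and because great circles through an antipodal pair are not unique, one must run through these degenerate sub-cases separately to guarantee that all four points share a single great circle.
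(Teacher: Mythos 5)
Your proposal is correct and follows essentially the same route as the paper: the identical splitting of $\Lambda$ into $p_0p_1p_2$ plus $p_2p_3$, the same combination of Proposition \ref{prop:gamma5-1} with the triangle inequality $\length(p_2p_3)\le\length(p_2p_0)+\length(p_0p_3)$, and the same case analysis on $\phi=\length(p_0p_2)$ (the paper's $\beta$), including the trichotomy $0<\phi<\pi$, $\phi=\pi$, $\phi=0$ for the general coplanarity claim. Your explicit verification of the cyclic order via $\theta<\length(p_0p_1)$ merely spells out a step the paper's ``Together with (\textit{ii})'' leaves implicit.
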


\begin{proof}
Let $\alpha$ be the length of $p_2 p_3$ and $\beta$ be the distance between $p_0$ and $p_2$. Then we have
\begin{eqnarray}
\length(\Lambda) &=& \length(p_0 p_1 p_2)+\length (p_2 p_3) \nonumber\\
&\leq& (2\pi-\beta) + \alpha \label{eqn:prop2}\\
&\leq& 2\pi +\theta. \nonumber
\end{eqnarray}
Here, (\ref{eqn:prop2}) comes from Proposition \ref{prop:gamma5-1}. The second inequality holds because $p_2 p_3$ is a length-minimizing geodesic.

Suppose that the equality holds. Then $\alpha=\beta+\theta$.

One of the following holds.
\begin{itemize}
\item [(\textit{i})] If $\alpha=\pi$, then $p_2$ and $p_3$ are antipodal;
\item [(\textit{ii})] If $\alpha<\pi$, then $\length(p_2 p_3)=\length(p_2 p_0 p_3)<\pi.$
\end{itemize}
If we further assume that each geodesic segment of $\Lambda$ has the length $< \pi$ and $\theta< \pi$, then we have $0<\beta<\pi$. Because if $\beta=\pi$ then $\theta=0$ and $\length(p_2 p_3)=\alpha=\pi$, and if $\beta=0$ then $\length(p_0 p_1)$ is equal to $\pi$ from the equality condition of (\ref{eqn:prop2}). That is a contradiction. Thus there is the unique great circle $S$ determined by $p_0$ and $p_2$. Since $p_1 \in S$ by (\ref{eqn:prop2}), $p_0 p_1 p_2 p_0 = S$. Together with ($ii$), this implies that $p_0 p_3 p_1 p_2 p_0=S$ and $\length(p_2 p_0 p_3) <\pi$. The converse can be shown directly.

In the case $\theta=\pi$, $\length(\Lambda)=3\pi$. Therefore each geodesic segment of $\Lambda$ has the length $\pi$.

Lastly, we will prove that $p_0, p_1, p_2, p_3$ lie in the same great circle when the equality holds. There is a trichotomy.
 \begin{itemize}
\item If $0<\beta<\pi$, then we already know that there is the unique great circle $S$ determined by $p_0$ and $p_2$ and that $p_1 \in S$. We only need to consider the case (\textit{i}). But in this case $p_3$ is the antipodal point of $p_2$.
\item If $\beta=\pi$, then $p_0=p_3$ and $p_2$ is the antipodal point of $p_0$.
\item If $\beta=0$, then $p_0=p_2$ and $p_1$ is the antipodal point of $p_0$.
\end{itemize}
For all of these three cases, we can conclude that $p_0, p_1, p_2, p_3$ lie in the same great circle.
\end{proof}

Before stating the general case, we are going to deal with $\Gamma_5 \subset \mathbb{R}^n$.
\begin{thm} \label{thm:gamma5}
Let $\Gamma_5$ be a piecewise geodesic Jordan curve with $5$ vertices in $\mathbb{R}^n$. Then
\begin{equation*}
\tc(\Gamma_5) < 4\pi.
\end{equation*}
\end{thm}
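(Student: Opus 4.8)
The plan is to reduce the total curvature of a piecewise‑geodesic Jordan curve to the length of a closed polygon on the unit sphere $\mathbb{S}^{n-1}$, and then to bound that spherical length using the propositions just proved. Recall that for a piecewise‑geodesic curve $\Gamma_5 = v_0 v_1 \cdots v_4 v_0$ in $\mathbb{R}^n$, the total curvature is concentrated at the five vertices, and the exterior angle at each vertex $v_i$ equals the spherical distance between the two unit tangent directions meeting there. If $t_i$ denotes the unit tangent vector of the edge $v_i v_{i+1}$ (viewed as a point of $\mathbb{S}^{n-1}$), then
\begin{equation*}
\tc(\Gamma_5) = \sum_{i=0}^{4} \dist_{\mathbb{S}^{n-1}}(t_{i-1}, t_i) = \length(t_0 t_1 t_2 t_3 t_4 t_0),
\end{equation*}
the length of the closed \emph{tantrix} (tangent‑indicatrix) polygon $T = t_0 t_1 t_2 t_3 t_4 t_0$ on $\mathbb{S}^{n-1}$. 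So the theorem is equivalent to showing $\length(T) < 4\pi$ for this $5$‑vertex spherical polygon.

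**Main steps.** First I would establish the tantrix identity above, carefully handling the closing vertex $v_0$ so that all five exterior angles are counted. Next, I would split the spherical pentagon $T$ at a well‑chosen pair of non‑adjacent tantrix vertices, say by introducing the geodesic joining $t_4$ and $t_1$ of length $\theta := \dist(t_4, t_1)$. This cuts $T$ into two pieces: the arc $t_4 t_0 t_1$, obtained from $t_4$ and $t_1$ by inserting the single point $t_0$, and the arc $t_1 t_2 t_3 t_4$, obtained from $t_1$ and $t_4$ by inserting the two points $t_2, t_3$. Proposition \ref{prop:gamma5-1} bounds the first piece by $2\pi - \theta$, and Proposition \ref{prop:gamma5-2} bounds the second by $2\pi + \theta$. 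Adding these,
\begin{equation*}
\length(T) = \length(t_4 t_0 t_1) + \length(t_1 t_2 t_3 t_4) \leq (2\pi - \theta) + (2\pi + \theta) = 4\pi.
\end{equation*}
This yields $\tc(\Gamma_5) \leq 4\pi$ immediately; the remaining and genuinely delicate work is to rule out equality.

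**The main obstacle: strictness.** Getting the non‑strict bound $4\pi$ is routine once the tantrix is set up, but proving the \emph{strict} inequality is the crux. Equality forces simultaneous equality in both propositions, so by the equality clauses of Propositions \ref{prop:gamma5-1} and \ref{prop:gamma5-2} all five tantrix points $t_0,\dots,t_4$ must lie on a single great circle $S \subset \mathbb{S}^{n-1}$, and moreover their arrangement along $S$ must be of the degenerate type described there (the curve $t_4 t_0 t_1 t_2 t_3 t_4$ wrapping around $S$ with the prescribed ordering and containing antipodal pairs). The plan is to show that any such configuration of tangent directions is incompatible with $\Gamma_5$ being a \emph{Jordan} (i.e.\ simple, non‑self‑intersecting, closed) curve in $\mathbb{R}^n$. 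Concretely, if all edge directions $t_i$ lie on one great circle, then $\Gamma_5$ is planar, and the forced ordering/antipodality of the $t_i$ on $S$ would make consecutive edges reverse direction in a way that either closes up prematurely or produces a self‑intersection — contradicting simplicity. Carrying this out requires translating the spherical equality data back into a statement about the edge vectors $v_{i+1}-v_i$ and checking each degenerate case; I expect this case analysis, rather than the length estimate itself, to be the part demanding the most care.
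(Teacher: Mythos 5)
Your proposal is correct and takes essentially the same route as the paper's proof: the identical tantrix reduction, the identical split of the spherical pentagon into a two-edge arc and a three-edge arc bounded via Propositions \ref{prop:gamma5-1} and \ref{prop:gamma5-2}, with the $\theta$-terms cancelling to give $\tc(\Gamma_5)\leq 4\pi$. The strictness analysis you anticipate is also the paper's: since $\Gamma_5$ is a Jordan curve each tantrix edge has length $<\pi$ (no cusps), so equality forces $T$ to wind a single great circle twice, making $\Gamma_5$ planar with rotation index $2$ and hence self-intersecting, a contradiction.
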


\begin{proof}
Let $v_0(=v_5), v_1, v_2, v_3, v_4$ be the vertices of $\Gamma_5$ and let $T$ be the \textit{tangent indicatrix of $\Gamma_5$}, that is the spherical image of the unit tangent vectors to $\Gamma_5$. Then $T$ is a closed geodesic polygonal curve (in general, not simple) in $\mathbb{S}^{n-1}$ with $5$ vertices $p_i$, where $p_i$ is the unit vector parallel to $v_i v_{i+1}$, for $i=0,1,2,3,4$. It is known that the length of $p_i p_{i+1}$ in $\mathbb{S}^{n-1}$ is the same as the turning angle around $v_{i+1}$ in $\mathbb{R}^n$ (\cite{M}). Since $\Gamma_5$ can not have a cuspidal point, the length of a geodesic segment of $T$ is strictly less than $\pi$ and thus $T$ is length-minimizing.

One can make a closed geodesic polygonal curve $T$ in $\mathbb{S}^{n-1}$ with $5$ vertices as follows: For given two points $p_0$ and $p_2$ in $\mathbb{S}^{n-1}$, $T$ can be identified with $p_0 p_1 p_2 p_3 p_4 p_0$ by adding three points $p_1$, $p_3$, $p_4$.

Let the distance between $p_0$ and $p_2$ be $\theta$. Joining Proposition \ref{prop:gamma5-1} and Proposition \ref{prop:gamma5-2}, we have the following inequality:
\begin{eqnarray*}
\tc(\Gamma_5)&=&\length(T)\\ &=& \length(p_0 p_1 p_2) +\length(p_0 p_4 p_3 p_2)\\
&\leq&(2\pi-\theta)+(2\pi+\theta)=4\pi.
\end{eqnarray*}
By a contradiction argument, we will prove that the inequality is strict. Suppose that the equality holds. If $\theta=\pi$, then $p_0=p_3$, $p_2=p_4$, and moreover $p_2$ and $p_3$ are antipodal points from Proposition \ref{prop:gamma5-2}. It is a contradiction. Hence $\theta<\pi$. Then the equality condition implies that $p_0 p_1 p_2 p_0$ and $p_0 p_2 p_4 p_3 p_0$ are great circles, respectively. Actually they coincide. Denote this great circle by $S$. Since $p_0 p_1 p_2 p_0$ and $p_0 p_2 p_4 p_3 p_0$ have an opposite direction, the tangent indicatrix $T=p_0 p_1 p_2 p_3 p_4 p_0$ of $\Gamma_5$ winds $S$ twice. Therefore $\Gamma_5$ is a planar curve and of rotation index $2$. It implies that $\Gamma_5$ has self-intersection. But it is a contradiction.
\end{proof}

The next proposition is a bridge to the general theorem about the total curvature for a piecewise geodesic Jordan curve $\Gamma_k$, where $k$ is an odd integer $\geq 5$. For a convenience, write $k=2m+1$ for $m\geq2$.
\begin{prop} \label{prop:gamma5 in Sn}
{\rm I(m)}: Let $\Gamma_{2m+1}$ be a closed geodesic polygonal curve in $\mathbb{S}^{n-1}$ with ${2m+1}$ vertices. Then
\begin{equation} \label{eqn:4pi in Sn}
\length(\Gamma_{2m+1}) \leq 2m\pi.
\end{equation}
If the equality holds, then every vertex of $\Gamma_{2m+1}$ lies in the same great circle $S$. Moreover if every geodesic segment of $\Gamma_{2m+1}$ has the length $< \pi$, then $\Gamma_{2m+1}$ winds $S$ $m$-times.

{\rm J(m)}: Let $p_0$ and $p_{2m}$ be points in $\mathbb{S}^{n-1}$. Let $\Lambda_{2m+1}$ be a geodesic polygonal curve $p_0 p_1 \cdots p_{2m-1} p_{2m}$ which is made by adding $2m-1$ points $p_1, \cdots, p_{2m-1}$. If $\length(p_0 p_{2m})=\theta$, then
\begin{equation*}
\length(\Lambda_{2m+1}) \leq 2m\pi-\theta.
\end{equation*}
If the equality holds, then every vertex of $\Lambda_{2m+1}$ lies in the same great circle $S$. Moreover if $\theta<\pi$ and every geodesic segment of $\Lambda_{2m+1}$ has the length $< \pi$, then $p_0 p_1 \cdots p_{2m} p_0$ winds $S$ $m$-times.
\end{prop}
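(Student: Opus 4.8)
The plan is to prove the two statements together by induction on $m$, after first observing that $J(m)$ is a formal consequence of $I(m)$. Indeed, given $\Lambda_{2m+1}=p_0 p_1\cdots p_{2m}$ with $\length(p_0 p_{2m})=\theta$, I would adjoin the minimizing geodesic $p_{2m}p_0$ of length $\theta$ to form the closed curve $p_0 p_1\cdots p_{2m}p_0$, which has $2m+1$ vertices. Applying $I(m)$ to it gives $\length(\Lambda_{2m+1})+\theta\le 2m\pi$, which is exactly the bound of $J(m)$; moreover the great-circle and winding conclusions of $J(m)$ are read off directly from those of $I(m)$ for the closed-up curve. Thus it suffices to establish $I(m)$, treating the harmless degeneracy $\theta=0$ separately. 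The base case $m=1$ is precisely Lemma \ref{lem:gamma5} ($=I(1)$), and $J(1)$ is Proposition \ref{prop:gamma5-1}.

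For the inductive step, assuming $I(k)$ and $J(k)$ for all $k<m$, I would split the closed curve $\Gamma_{2m+1}=p_0 p_1\cdots p_{2m}p_0$ at the vertices $p_2$ and $p_{2m}$ and write
\begin{equation*}
\length(\Gamma_{2m+1})=\length(p_0 p_1 p_2)+\length(p_2 p_3\cdots p_{2m})+\length(p_{2m}p_0).
\end{equation*}
Writing $\theta=\length(p_0 p_2)$, $\beta=\length(p_2 p_{2m})$ and $\alpha=\length(p_{2m}p_0)$, Proposition \ref{prop:gamma5-1} bounds the first arc by $2\pi-\theta$; the middle arc has $2(m-1)+1$ vertices and endpoints at distance $\beta$, so the inductive hypothesis $J(m-1)$ bounds it by $2(m-1)\pi-\beta$; and the last arc equals $\alpha$. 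Adding these and using the triangle inequality $\alpha\le\theta+\beta$ yields $\length(\Gamma_{2m+1})\le 2m\pi-\theta-\beta+\alpha\le 2m\pi$. This is the inductive version of the splitting used in Theorem \ref{thm:gamma5} together with the ``$+\theta$'' mechanism of Proposition \ref{prop:gamma5-2}.

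The delicate part, which I expect to be the main obstacle, is the equality analysis. When $\length(\Gamma_{2m+1})=2m\pi$ all three estimates above are equalities. The equality clause of $J(m-1)$ places $p_2,\dots,p_{2m}$ on a common great circle $S_2$, and $\alpha=\theta+\beta$ forces $p_2$ to lie on a minimizing geodesic from $p_{2m}$ to $p_0$; since $p_2,p_{2m}\in S_2$ this pins the geodesic through them to $S_2$ and hence puts $p_0\in S_2$. The equality clause of Proposition \ref{prop:gamma5-1} puts $p_0 p_1 p_2 p_0$ on a great circle $S_1$, and as $p_0,p_2$ are non-antipodal the great circle through them is unique, so $S_1=S_2=:S$ and $p_1\in S$ as well; thus every vertex lies on $S$. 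The genuinely fiddly work is dispatching the degenerate configurations ($\theta$ or $\beta$ equal to $0$ or $\pi$, or antipodal pairs among the $p_i$), which I would handle by the same case distinctions as in Propositions \ref{prop:gamma5-1} and \ref{prop:gamma5-2}; under the standing hypothesis that every edge has length $<\pi$ these degeneracies are excluded exactly as in the proof of Theorem \ref{thm:gamma5}.

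Finally, assuming in addition that every geodesic segment of $\Gamma_{2m+1}$ has length $<\pi$, each of its $2m+1$ edges is a minor arc of $S$. Orienting $S$ and letting $a_i$ denote the signed arc length of the $i$-th edge, we have $|a_i|<\pi$, $\sum_i a_i=2\pi w$ for the integer winding number $w$ (which I may take $\ge 0$), and $\sum_i|a_i|=2m\pi$. If $P$ and $N$ denote the sums of the positive and of the negative $|a_i|$, then $P+N=2m\pi$ and $P-N=2\pi w$, so $P=(m+w)\pi$ and $N=(m-w)\pi$, both integer multiples of $\pi$. As each arc is strictly shorter than $\pi$, the number of positive edges strictly exceeds $P/\pi=m+w$ and, if $w<m$, the number of negative edges strictly exceeds $N/\pi=m-w$; being integers, these counts are then at least $m+w+1$ and $m-w+1$, summing to at least $2m+2$ and contradicting that $\Gamma_{2m+1}$ has only $2m+1$ edges. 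Hence $w=m$: all arcs run the same way and $\Gamma_{2m+1}$ winds $S$ exactly $m$ times. This completes $I(m)$, and $J(m)$ then follows by the closing-up reduction of the first paragraph.
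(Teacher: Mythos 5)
Your proposal is correct, and its skeleton is the paper's: a mutual induction between ${\rm I}$ and ${\rm J}$, with ${\rm J(m)}$ obtained from ${\rm I(m)}$ by closing up with the edge $p_{2m}p_0$ of length $\theta$, exactly as in the paper. But you reorganize the inductive step in a genuinely different (and slightly cleaner) way. The paper derives ${\rm I(m+1)}$ from ${\rm J(m)}$ plus Proposition \ref{prop:gamma5-2}, peeling off the three-edge arc $p_0p_1p_2p_3$ with the bound $2\pi+\theta$; you instead derive ${\rm I(m)}$ from ${\rm J(m-1)}$ plus Proposition \ref{prop:gamma5-1}, peeling off the two-edge arc $p_0p_1p_2$ (bound $2\pi-\theta$), the middle arc (bound $2(m-1)\pi-\beta$), and the single closing edge controlled by the triangle inequality $\alpha\le\theta+\beta$. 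In effect you have inlined the proof of Proposition \ref{prop:gamma5-2} (whose ``$+\theta$'' mechanism is precisely this triangle inequality) into the induction; this buys you the base case $m=1$, which is just Lemma \ref{lem:gamma5}, whereas the paper must start from ${\rm I(2)}$ as extracted from the proof of Theorem \ref{thm:gamma5}. Your equality analysis is sketched where the paper's is too (the paper says only ``in the same manner as Theorem \ref{thm:gamma5}''), and the degenerate cases you defer do close along the lines you indicate: if $\beta=0$ or $\theta=\pi$, equality forces the relevant subarc to have all edges of length $\pi$, so its vertices alternate between antipodal points, and since great circles are centrally symmetric ($x\in S\Rightarrow -x\in S$) all vertices still land on one great circle; under the hypothesis that every edge has length $<\pi$ these degeneracies are outright impossible, as you claim. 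Finally, your signed-arc counting proof of the $m$-fold winding ($P=(m+w)\pi$, $N=(m-w)\pi$, with each arc $<\pi$ forcing at least $m+w+1$ positive and $m-w+1$ negative edges unless $w=m$) is an explicit and more robust argument than anything in the paper, which for $m=2$ deduces the double winding from the opposite orientations of the two great circles in the split and leaves the general case implicit. One small point of care your closing-up reduction needs, and which you flag correctly: when $\theta=0$ the added edge degenerates, but then the closed curve has $2m$ edges each of length $\le\pi$, giving the bound $2m\pi$ directly, with equality forcing all edges equal to $\pi$, so the conclusion survives.
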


\begin{proof}
Use the mathematical induction in $m\geq 2$ as follows:
\begin{itemize}
\item We already proved I(2) on the way to prove Theorem \ref{thm:gamma5}.
\item If I(m) holds then so does J(m) for $m \geq 2$.
\item For a given two points $p_0$ and $p_3$ in $\mathbb{S}^{n-1}$, a closed geodesic polygonal curve $\Gamma_{2m+3}$ is made by adding $2m+1$ points as follows: Add two points $p_1,p_2$ and $2m-1$ points $p_4,\cdots, p_{2m+2}$, respectively. And then identify $\Gamma_{2m+3}$ with $p_0 p_1 \cdots p_{2m+2}p_0$. Joining J(m) and Proposition \ref{prop:gamma5-2}, we have I(m+1) in the same manner as Theorem \ref{thm:gamma5}.
\end{itemize}
\end{proof}

\begin{thm} \label{thm:gammak}
Let $m$ be an integer $\geq 2$. Let $\Gamma_{2m+1}$ be a piecewise geodesic Jordan curve with $2m+1$ vertices in $\mathbb{R}^n$. Then
\begin{equation} \label{eqn:oddpi}
\tc(\Gamma_{2m+1}) < 2m\pi.
\end{equation}
\end{thm}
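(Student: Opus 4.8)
The plan is to reduce Theorem~\ref{thm:gammak} to the spherical length estimate already established in Proposition~\ref{prop:gamma5 in Sn}, exactly as Theorem~\ref{thm:gamma5} was derived from the case $m=2$. First I would form the \emph{tangent indicatrix} $T$ of $\Gamma_{2m+1}$, the spherical image in $\mathbb{S}^{n-1}$ of the unit tangent vectors, so that $T$ is a closed geodesic polygonal curve with $2m+1$ vertices $p_0,\ldots,p_{2m}$, where $p_i$ is the unit vector parallel to the edge $v_iv_{i+1}$. As recorded in the proof of Theorem~\ref{thm:gamma5}, the length of the geodesic segment $p_ip_{i+1}$ equals the turning (exterior) angle of $\Gamma_{2m+1}$ at the vertex $v_{i+1}$, so that $\tc(\Gamma_{2m+1})=\length(T)$. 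Because $\Gamma_{2m+1}$ is a genuine Jordan curve it has no cuspidal point, whence each turning angle is strictly less than $\pi$ and every geodesic segment of $T$ has length $<\pi$; in particular $T$ is length-minimizing edge by edge. Applying the statement I($m$) of Proposition~\ref{prop:gamma5 in Sn} to $T$ immediately yields the non-strict bound
\begin{equation*}
\tc(\Gamma_{2m+1})=\length(T)\leq 2m\pi.
\end{equation*}

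The substance of the theorem is upgrading this to a \emph{strict} inequality, and I would do this by a contradiction argument paralleling the $m=2$ case. Suppose $\length(T)=2m\pi$. Since each edge of $T$ has length $<\pi$, the equality clause of I($m$) applies in full: every vertex $p_i$ lies on a single great circle $S$, and $T$ winds around $S$ exactly $m$ times. The key consequence is that the tangent indicatrix is then contained in the great circle $S\subset\mathbb{S}^{n-1}$, which is the set of unit tangent directions lying in a fixed $2$-plane through the origin. Hence every edge direction of $\Gamma_{2m+1}$ lies in that $2$-plane, so $\Gamma_{2m+1}$ is a \emph{planar} closed polygon whose tangent indicatrix covers $S$ exactly $m\geq 2$ times. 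This means $\Gamma_{2m+1}$ has rotation index $m\geq 2$ as a planar closed curve.

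The last step is to extract the contradiction from planarity together with rotation index $\geq 2$. A closed \emph{simple} (Jordan) plane curve has rotation index $\pm 1$ by the theorem of turning tangents (Hopf's Umlaufsatz), so a planar closed polygon of rotation index $m\geq 2$ cannot be embedded and must have a self-intersection. This contradicts the hypothesis that $\Gamma_{2m+1}$ is a Jordan curve, and therefore the inequality must be strict, proving \eqref{eqn:oddpi}.

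I expect the main obstacle to be the rigidity analysis in the equality case rather than the initial inequality. One must be careful that I($m$) delivers both conclusions---coplanarity of the vertices \emph{and} the winding number $m$---which requires the genuine short-edge hypothesis $\length(p_ip_{i+1})<\pi$ that is guaranteed here precisely because a Jordan curve admits no cusp. A secondary subtlety is translating ``$T$ winds $S$ $m$ times'' into a precise statement about the rotation index of the planar polygon $\Gamma_{2m+1}$, so that Hopf's Umlaufsatz can be invoked cleanly to force self-intersection; this is the same mechanism used at the end of Theorem~\ref{thm:gamma5}, now carried out for general $m$.
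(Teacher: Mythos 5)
Your proposal is correct and follows essentially the same route as the paper: pass to the tangent indicatrix, apply statement I($m$) of Proposition~\ref{prop:gamma5 in Sn} (using that Jordan curves have no cusps, so every edge of $T$ has length $<\pi$), and in the equality case conclude that $T$ covers a great circle $m$ times, forcing $\Gamma_{2m+1}$ to be planar with rotation index $m\geq 2$, contradicting simplicity. The paper's own proof states the final contradiction more tersely, but the rotation-index/Umlaufsatz mechanism you spell out is exactly the one made explicit in its proof of Theorem~\ref{thm:gamma5}.
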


\begin{proof}
The tangent indicatrix $T$ of $\Gamma_{2m+1}$ is a closed geodesic polygonal curve in $\mathbb{S}^{n-1}$ with $2m+1$ vertices. By (\ref{eqn:4pi in Sn}), we have
\begin{displaymath}
\tc(\Gamma_{2m+1})=\length(T)\leq 2m\pi.
\end{displaymath}
Since $\Gamma_{2m+1}$ is a Jordan curve, every geodesic segment of $T$ has the length $< \pi$. If the equality holds, then $T$ is a great circle $S$ as a set and winds itself $m$-times. It is a contradiction.
\end{proof}

In (\ref{eqn:oddpi}), the upper bound $2m\pi$ is sharp. Let $\widetilde{\Gamma}=v_0 v_1 \cdots v_{2m-1} v_0$ be a closed polygon in $\mathbb{R}^n$ with $2m$ vertices, where $v_{\text{even}}=v_0$ and $v_{\text{odd}}=v_1$. Take new point $v_{2m}$, which is different from $v_0$ and $v_{2m-1}$, in a geodesic segment of $\widetilde{\Gamma}$. We can move vertices $v_0,\cdots, v_{2m-1},v_{2m}$ slightly to make $\widetilde{\Gamma}$ a polygonal Jordan curve $\Gamma$ with $2m+1$ vertices. It is possible to construct such a $\Gamma$ arbitrary close to $\widetilde{\Gamma}$. Therefore there is a polygonal Jordan curve $\Gamma$ of the total curvature $2m\pi-\epsilon$ for small $\epsilon>0$.

In particular, $\Gamma_5 \subset \mathbb{R}^n$ has interesting properties as follows:
\begin{cor} \label{cor:unknot}
Let $\Gamma_5$ be a piecewise geodesic Jordan curve with $5$ vertices in $\mathbb{R}^n$. Then any minimal surface $\Sigma$ in $\mathbb{R}^n$ bounded by $\Gamma_5$ is embedded. If $\Gamma_5 \subset \mathbb{R}^3$, then it is an unknot.
\end{cor}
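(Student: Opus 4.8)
The plan is to derive the corollary directly from the strict bound $\tc(\Gamma_5) < 4\pi$ established in Theorem~\ref{thm:gamma5}, so that no new geometry is needed beyond invoking the two external results recalled in the introduction. First I would record that $\Gamma_5$, being a piecewise geodesic Jordan curve, is in particular a rectifiable Jordan curve in $\mathbb{R}^n$ whose total curvature equals the length of its tangent indicatrix, and that by Theorem~\ref{thm:gamma5} this total curvature satisfies $\tc(\Gamma_5) < 4\pi$.

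For the embeddedness statement I would appeal to the theorem of Ekholm, White and Wienholtz \cite{EWW}: any minimal surface in $\mathbb{R}^n$ bounded by a Jordan curve of total curvature at most $4\pi$ is embedded. Since $\tc(\Gamma_5) < 4\pi$, the hypothesis is met with room to spare and the conclusion follows at once. It is worth emphasizing that the strict inequality places us safely away from the borderline value $\tc = 4\pi$, where the embedded surface could degenerate to a cone; here no such degeneration is possible.

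For the unknot assertion, restricting to $\mathbb{R}^3$, I would use the F\'ary--Milnor theorem \cite{Fary}, \cite{M} in contrapositive form: a knotted Jordan curve in $\mathbb{R}^3$ has total curvature greater than $4\pi$. Because $\tc(\Gamma_5) < 4\pi$, the curve $\Gamma_5$ cannot be knotted, and hence it is an unknot.

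The corollary itself carries essentially no independent difficulty; all the substance is already contained in Theorem~\ref{thm:gamma5}, whose strict bound is precisely tuned to the thresholds appearing in \cite{EWW} and in \cite{Fary}, \cite{M}. The only point requiring mild care is matching the precise form of the cited theorems---namely that \cite{EWW} covers arbitrary minimal surfaces spanning $\Gamma_5$ rather than merely area minimizers, and that the notion of total curvature employed there agrees with the $\tc$ of Theorem~\ref{thm:gamma5} (the length of the tangent indicatrix). Both facts are standard and demand no further argument, so I expect the proof to be a short, direct deduction.
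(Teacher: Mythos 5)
Your proposal is correct and follows essentially the same route as the paper: the paper's proof of Corollary~\ref{cor:unknot} likewise just combines the strict bound $\tc(\Gamma_5)<4\pi$ from Theorem~\ref{thm:gamma5} with the embeddedness theorem of \cite{EWW} and the F\'ary--Milnor theorem \cite{Fary}, \cite{M}. The only (harmless) inessential remark in your write-up is the aside about cone degeneration at the borderline value $4\pi$, which is not needed since \cite{EWW} already gives embeddedness for total curvature at most $4\pi$, and in any case your strict inequality sidesteps the issue.
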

\begin{proof}
See \cite{EWW} for embeddedness, and see F{\'a}ry-Milnor theorem (\cite{Fary}, \cite{M}) for unknottedness. (For reference, the latter one can be also obtained in a different way using Theorem 4.8 in \cite{M}).
\end{proof}

Corollary \ref{cor:unknot} leads us to the following problem in a very natural way.
\begin{p} \label{problem}
Let $\Gamma_5$ be a piecewise geodesic Jordan curve with $5$ vertices in $\mathbb{H}^n$ or $\mathbb{S}^n_+$. If $\Sigma$ is a minimal surface bounded by $\Gamma_5$, is $\Sigma$ embedded? Is $\Gamma_5$ an unknot in $\mathbb{H}^3$ or $\mathbb{S}^3_+$?
\end{p}

In the next section, we will give an answer to Problem \ref{problem}.

\subsection{On minimal surfaces bounded by $\Gamma_5$ in $\mathbb{H}^n$ and $\mathbb{S}^n_+$}

Let $N$ be an $n$-dimensional Riemannian manifold. The injectivity radius $i(N)$ of $N$ is the largest $r$ such that the exponential map is an embedding on an open ball of radius $r$ in $T_p N$ for all $p$. Observe that $i(\mathbb{R}^n)=i(\mathbb{H}^n)=\infty$ and $i(\mathbb{S}^n)=\pi$.

\begin{df} {(\cite{CG1})}
Let $N$ be an $n$-dimensional space form. Let $\Gamma \subset N$ be a $k$-dimensional rectifiable set in $N$ and let $p$ be a point in $N$ such that $\dist(p,q)<i(N)$ for all $q \in \Gamma$. Let $J(\rho)$ be a radial function on $N$ as follows: $J(\rho)=\sin \rho$ in $\mathbb{S}^n$, $\rho$ in $\mathbb{R}^n$ and $\sinh \rho$ in $\mathbb{H}^n$. The \textit{$k$-dimensional angle $A^k(\Gamma,p)$ of $\Gamma$ viewed from $p$} is defined by setting
\begin{equation*}
A^k(\Gamma,p)=\frac{\vol((p \cone \Gamma) \cap S_\rho(p))}{J(\rho)^k},
\end{equation*}
where $S_\rho(p)$ is the geodesic sphere of radius $\rho<\dist(p,\Gamma)$ centered at $p$, and the volume is measured counting multiplicity. Clearly the angle does not depend on $\rho$.
\end{df}

Note that
\begin{equation} \label{eqn:angle2}
A^k(\Gamma,p)=(k+1)\omega_{k+1}\Theta_{p\cone \Gamma}(p).
\end{equation}

Before stating the theorem, we will sketch models of $\mathbb{S}^n$ in a similar way to Section \ref{sect:hyperbolic}. Note that $\mathbb{S}^n$ is isometrically immersed onto the unit sphere $S_1$, $x_0^2+x_1^2+\cdots+x_n^2=1$ in $\mathbb{R}^{n+1}$ endowed with the usual metric, $\md s^2=\md x_0^2+\md x_1^2+\cdots+\md x_n^2$. Let $\rho$ be the distance in $\mathbb{S}^n$ measured from the south pole, $(-1,0,\cdots,0)$.
Then $|x|=\sqrt{x_1^2+\cdots+x_n^2}=\sin \rho$ and $\md |x|=\cos \rho \cdot \md \rho$.
We can consider $\mathbb{S}^n$ as $\mathbb{R}^n \cup \{\infty\}$ via the stereographic projection $\Phi:S_1 \subset \mathbb{R}^{n+1} \rightarrow \mathbb{R}^n \cup \{\infty\}$ of $S_1$ onto $\mathbb{R}^n \cup \{\infty\}$ which is given by
\begin{displaymath}
\Phi\left(x_0,x_1,\cdots,x_n\right)=\left(u_1,\cdots,u_n \right) \hspace{3mm} \text{where } u_i=\frac{x_i}{1-x_0},\, i=1,\cdots,n.
\end{displaymath}
Since $\Phi$ is 1-1 and onto, there is the inverse $\Phi^{-1}: \mathbb{R}^n \cup \{\infty\} \rightarrow
S_1$ such that $(u_1,\cdots,u_n) \mapsto (x_0,x_1,\cdots,x_n)$ where $x_0=\frac{r^2-1}{r^2+1}$,
$x_i=\frac{2u_i}{1+r^2}$, $i=1,\cdots,n$.
Then $\Phi^{-1}$ induces on $\mathbb{R}^n \cup \{\infty\}$ the metric $\md s_{\rm B}^2=\frac{4\md
s_\mathbb{R}^2}{(1+r^2)^2}$ where $\md s_\mathbb{R}^2=\md u_1^2+\cdots+\md u_n^2$ is the Euclidean metric and $r^2=u_1^2+\cdots+u_n^2$.
Note that
\begin{displaymath}
\sin \rho=\frac{2r}{1+r^2},\, \rho=\arcsin \frac{2r}{1+r^2} \hspace{3mm} \text{and } r=\cot \frac{1}{2} \rho.
\end{displaymath}

To estimate the density, we need the following proposition.
\begin{prop} {\rm (\cite{CG2}) (Density comparison)} \label{prop:density comparison}\\
\indent {\rm (1)} Let $\Gamma$ be a (piecewise) $C^2$ immersed closed curve in $\mathbb{S}^n_+$. $p \in \mathbb{S}^n_+$ such that $\dist(p,\Gamma)\leq\frac{\pi}{2}$. Let $\Sigma$ be a branched minimal surface in $\mathbb{S}^n_+$ with boundary $\Gamma$. Then
\begin{equation} \label{eqn:angle3}
\Theta_\Sigma (p) < \Theta_{p\cone \Gamma}(p),
\end{equation}
unless $\Sigma$ is a totally geodesic.

{\rm (2)} Let $\Gamma$ be a (piecewise) $C^2$ immersed closed curve in $\mathbb{H}^n$. Let $\Sigma$ be a branched minimal surface in $\mathbb{H}^n$ with boundary $\Gamma$ in $\mathbb{H}^n$. Then
\begin{equation*}
\Theta_{\Sigma}(p)<\Theta_{p\cone \Gamma} (p),
\end{equation*}
unless $\Sigma$ is a totally geodesic.
\end{prop}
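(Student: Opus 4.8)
The plan is to derive both inequalities from a monotonicity formula for branched minimal surfaces in a space form, adapting the Euclidean argument of \cite{EWW} to the curvature through the radial weight $J$. Write $r=\dist(p,\cdot)$ and let $J$ be the radial function of the ambient space, so $J(\rho)=\sinh\rho$ in $\mathbb{H}^n$ and $J(\rho)=\sin\rho$ in $\mathbb{S}^n_+$. I would record the density as $\Theta_\Sigma(p)=\lim_{\rho\to0}\area(\Sigma\cap B_\rho(p))/(\pi\rho^2)$, measured with multiplicity so that branch points produce no new phenomenon, and I would introduce the normalized area $\Phi(\rho)=\area(\Sigma\cap B_\rho(p))/V(\rho)$, where $V(\rho)=2\pi\int_0^\rho J(t)\,\md t$ is the area of a geodesic disk of radius $\rho$ in the totally geodesic model surface. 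The key observation is that the cone $p\cone\Gamma$ is radial from $p$, hence $|\nabla^\perp r|\equiv0$ along it, so a direct computation gives $\Phi_{p\cone\Gamma}(\rho)\equiv\Theta_{p\cone\Gamma}(p)$; the comparison will amount to measuring how $\Sigma$ deviates from this radial model.

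The engine is the first variation formula applied to the weighted radial field $X=J(r)\nabla r$. Using the space-form Hessian comparison $\nabla^2 r=(J'/J)(g-\md r\otimes\md r)$, a trace over a tangent plane gives the pointwise identity $\mathrm{div}_\Sigma X=2J'(r)$ on any surface, and minimality removes the mean-curvature term from the divergence theorem, so that $2\int_{\Sigma\cap B_\rho}J'(r)=J(\rho)\int_{\Sigma\cap S_\rho(p)}|\nabla^\top r|+\int_{\Gamma\cap B_\rho}J(r)\langle\nabla r,\nu_\Gamma\rangle$. Differentiating in $\rho$ and feeding in the coarea formula, the growth of $\Phi$ is governed by the manifestly non-negative slice integrand $|\nabla^\top r|^{-1}-|\nabla^\top r|=|\nabla^\perp r|^2/|\nabla^\top r|$, so $\Phi'\ge0$. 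Integrating this non-negative conical defect against the appropriate positive radial weight over all of $\Sigma$ and collecting the boundary contribution along $\Gamma$—which equals the length of the radial projection of $\Gamma$ onto the unit geodesic sphere at $p$, namely $A^1(\Gamma,p)=2\pi\Theta_{p\cone\Gamma}(p)$ by \eqref{eqn:angle2}—I would arrive at the clean identity $\Theta_{p\cone\Gamma}(p)-\Theta_\Sigma(p)=\frac{1}{2\pi}\int_\Sigma(\text{non-negative defect})$, whence $\Theta_\Sigma(p)\le\Theta_{p\cone\Gamma}(p)$. In the hemisphere case the weight $\sin\rho$ and $J'=\cos\rho$ are positive only on $[0,\pi/2]$, so the hypothesis $\dist(p,\Gamma)\le\frac{\pi}{2}$ is exactly what keeps $\mathrm{div}_\Sigma X\ge0$ and the flux identity correctly signed.

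To upgrade to strict inequality I would analyze the equality case directly from this identity: $\Theta_\Sigma(p)=\Theta_{p\cone\Gamma}(p)$ forces the defect integral to vanish, hence $\nabla^\perp r\equiv0$, so $\nabla r$ is everywhere tangent to $\Sigma$ and $\Sigma$ is ruled by the geodesics issuing from $p$; that is, $\Sigma=p\cone\Gamma$ as a varifold. A cone with vertex $p$ that is simultaneously minimal in a space form can only be totally geodesic, which yields precisely the stated dichotomy. The main obstacle I anticipate is not the Euclidean skeleton but the curvature bookkeeping: verifying that the $J$-weighted divergence and the slice derivative combine to keep the defect non-negative across the full range of $\rho$, matching the boundary flux on $\Gamma$ to the radial-projection length without error terms, and—most delicate in $\mathbb{S}^n_+$—confirming that no portion of $\Sigma$ escapes the region $r\le\frac{\pi}{2}$ on which $\sin\rho$ is monotone, so that the hemisphere hypothesis and the branched, immersed generality of $\Sigma$ are genuinely compatible.
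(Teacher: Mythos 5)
Your proposal is correct and follows essentially the same route as the paper, which proves nothing new here but simply invokes the monotonicity argument of Choe--Gulliver \cite{CG2}: the weighted radial field $X=J(r)\nabla r$ with $\mathrm{div}_\Sigma X=2J'(r)$, the comparison of the normalized area with the cone $p\cone\Gamma$ via the defect $|\nabla^\perp r|^2/|\nabla^\top r|$, and the equality analysis ($\nabla^\perp r\equiv 0$ forces $\Sigma$ to be a geodesic cone, hence totally geodesic in a space form) are exactly the cited proof, and the vertex issue for piecewise $C^2$ curves that the paper's remark addresses is insensitive to your scheme. The two loose ends you flag are genuine but are settled in \cite{CG2}: the boundary flux $\int_{\Gamma}J(r)\langle\nabla r,\nu_\Gamma\rangle$ is only bounded above by, not equal to, the projected length $J(\rho)\,A^1(\Gamma,p)$, so your ``clean identity'' must carry an extra non-negative boundary defect (harmless for the equality case, since equality then forces both defects to vanish), and in $\mathbb{S}^n_+$ the needed containment $\Sigma\subset\overline{B}_{\pi/2}(p)$, which keeps $J'=\cos r\geq 0$ on all of $\Sigma$, does not follow from the convex hull property alone but from a maximum-principle lemma using that $\cos r$ and the height function of the hemisphere both solve $\Delta_\Sigma u+2u=0$, the latter being strictly positive on $\Sigma\subset\mathbb{S}^n_+$.
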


\begin{proof}
See \cite{CG2}. The original proof in \cite{CG2} only deals with a regular $C^2$ curve $\Gamma$. However the density comparison (which is shown by using monotonicity) at the vertex of $\Gamma$ can be obtained in the same manner even though $\Gamma$ is a piecewise $C^2$ curve.
\end{proof}

The original version of the second part of Proposition \ref{prop:density comparison} is more powerful. It is proved for $\Sigma$ which is a branched minimal surface in an $n$-dimensional simply connected Riemannian manifold with sectional curvature $\leq -\kappa^2$.

The following is the main theorem of Section \ref{sect:compact}.

\begin{thm} \label{thm:gamma5 in Sn}
Let $\Gamma_5$ be a piecewise geodesic Jordan curve with $5$ vertices in a geodesic ball $B_r \subset \mathbb{S}^n_+$ of radius $r < \frac{\pi}{4}$. Then any minimal surface $\Sigma \subset B_r$ bounded by $\Gamma_5$ is embedded.
\end{thm}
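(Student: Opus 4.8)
The plan is to show that the density satisfies $\Theta_\Sigma(p) < 2$ at every interior point $p$ of $\Sigma$, which forces $\Sigma$ to be free of self-intersections and hence embedded, since a genuine double point would carry density $\geq 2$. If $\Sigma$ happens to be totally geodesic it is automatically embedded, so I may assume throughout that it is not.

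First I would fix an arbitrary point $p \in \Sigma \subset B_r$. Since any two points of $B_r$ lie within distance $2r < \frac{\pi}{2}$ of each other, every point of $\Gamma_5$ sits within distance $< \frac{\pi}{2}$ of $p$ and well inside the injectivity radius $i(\mathbb{S}^n)=\pi$; this is precisely the hypothesis required to invoke Proposition \ref{prop:density comparison}(1), which yields the strict density comparison $\Theta_\Sigma(p) < \Theta_{p\cone\Gamma_5}(p)$. It therefore suffices to bound the density of the cone from above by $2$.

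Next I would identify $\Theta_{p\cone\Gamma_5}(p)$ with a length in the unit tangent sphere $\mathbb{S}^{n-1} \subset T_p\mathbb{S}^n$. The radial projection from $p$ carries $\Gamma_5$ to a closed curve $\widetilde{\Gamma}_5$ in $\mathbb{S}^{n-1}$; because the cone from $p$ over any geodesic segment is a totally geodesic piece of a $2$-sphere, each edge $v_i v_{i+1}$ projects to a great-circle arc, so $\widetilde{\Gamma}_5$ is a closed geodesic polygonal curve with $5$ vertices. The condition $r < \frac{\pi}{4}$ reappears here: it keeps the angle at $p$ subtended by each edge strictly below $\pi$, so that every arc of $\widetilde{\Gamma}_5$ is length-minimizing. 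Using the definition of the $1$-dimensional angle together with (\ref{eqn:angle2}), the length of $\widetilde{\Gamma}_5$ equals $A^1(\Gamma_5,p)$ and one obtains $\Theta_{p\cone\Gamma_5}(p) = \length(\widetilde{\Gamma}_5)/(2\pi)$.

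Finally I would apply the case $m=2$ of Proposition \ref{prop:gamma5 in Sn}, namely I(2), to the $5$-vertex closed geodesic polygonal curve $\widetilde{\Gamma}_5 \subset \mathbb{S}^{n-1}$, giving $\length(\widetilde{\Gamma}_5) \leq 4\pi$ and hence $\Theta_{p\cone\Gamma_5}(p) \leq 2$. Combining this with the strict comparison of the second step produces $\Theta_\Sigma(p) < 2$, as required. I expect the main obstacle to be the geometric bookkeeping in the projection step: checking carefully that the radial image of each geodesic edge really is a length-minimizing great-circle arc, so that $\widetilde{\Gamma}_5$ is an honest geodesic polygon to which I(2) applies, and confirming the identity $\Theta_{p\cone\Gamma_5}(p) = \length(\widetilde{\Gamma}_5)/(2\pi)$. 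Note that the equality analysis of I(2) is not needed, because the strict inequality is already supplied for free by Proposition \ref{prop:density comparison}(1).
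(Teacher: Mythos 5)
There is a genuine gap: you only estimate the density at interior points $p \in \Sigma \setminus \Gamma_5$, but embeddedness up to the boundary requires density estimates at points of $\Gamma_5$ as well, and there the relevant threshold is $\frac{3}{2}$, not $2$. A non-embedded $\Sigma$ need not have an interior double point: an interior sheet of the surface can pass through a point $p$ of the boundary curve, and at such a point the density is only $\geq \frac{1}{2} + 1 = \frac{3}{2}$, which your bound $\Theta_\Sigma(p) < 2$ does not exclude. This is exactly why the paper's proof splits into three cases. For $p \in \Gamma_5 \setminus \{\text{vertices}\}$, the radial projection $(p \cone \Gamma_5) \cap S_1(0)$ is not a closed $5$-gon but an open geodesic polygonal arc whose two endpoints (the two tangent directions of $\Gamma_5$ at $p$) are antipodal; the {\rm J}-type bound from Proposition \ref{prop:gamma5 in Sn} gives length $\leq 3\pi$, hence $\Theta_\Sigma(p) < \frac{3}{2}$. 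For $p$ a vertex with exterior angle $\theta$, Proposition \ref{prop:gamma5-2} gives length $\leq 2\pi + (\pi - \theta)$, hence $\Theta_\Sigma(p) < \frac{3}{2} - \frac{\theta}{2\pi}$. Without these two boundary cases the conclusion ``$\Sigma$ is embedded'' does not follow from your interior estimate.

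Your interior case itself is essentially the paper's Case I, with one legitimate variation: you obtain strictness from the strict comparison in Proposition \ref{prop:density comparison}(1) (after setting aside the totally geodesic case) rather than, as the paper does, proving that equality in the $4\pi$ bound forces the projected polygon to wind a great circle twice, contradicting that $\Gamma_5$ is a Jordan curve. Your route is workable for interior points, but note two caveats: the claim that a totally geodesic $\Sigma$ is ``automatically embedded'' is not free --- it needs a short argument (e.g., the multiplicity of the branched immersion onto the great $2$-sphere is locally constant off $\Gamma_5$, changes by one across $\Gamma_5$, and vanishes outside $B_r$), whereas the paper's strict length bound covers this case without any split; and in any repaired boundary cases you would still need the strict-comparison (or an equality analysis) argument there too. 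Also, the role of $r < \frac{\pi}{4}$ is primarily to guarantee, via convexity of $B_r$ and $p \in \cv(\Gamma_5)$, that $\dist(p,\Gamma_5) < \frac{\pi}{2}$ so that Proposition \ref{prop:density comparison}(1) applies; the length-minimizing property of the projected arcs, which you flag as the main obstacle, is the lesser issue.
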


\begin{proof}
Let $p \in \Sigma \subset B_r \subset \mathbb{S}^n_+$. Since $B_r$ is convex, $p \in \cv(\Gamma_5)$ where $\cv(\Gamma_5)$ is the convex hull of $\Gamma_5$. Thus $\dist(p, \Gamma_5) < \frac{\pi}{2}$. We may assume that $p$ is identified with the origin in $\mathbb{R}^n \cup \{\infty\}$ via the stereographic projection. Joining (\ref{eqn:angle3}) with (\ref{eqn:angle2}) we get
\begin{displaymath}
(k+1)\omega_{k+1} \Theta_\Sigma (p) \leq A^k(\Gamma,p).
\end{displaymath}
Therefore
\begin{eqnarray*}
\Theta_\Sigma (p) &\leq& \frac{\length_\mathbb{S}((p \cone \Gamma_5) \cap S_\rho(p))}{2\pi \sin \rho}\\
&=&\frac{1}{2\pi} \int_{(p \cone \Gamma_5) \cap S_\rho(p)} \frac{1}{\sin \rho}\md \sigma^{\mathbb{S}}\\
&=&\frac{1}{2\pi} \int_{\Phi(p \cone \Gamma_5) \cap S_{\rho^*}(0)} \frac{1}{r} \cdot{\left(\frac{1+r^2}{2}\right)} \md \sigma^{\mathbb{S}}\\
&=&\frac{1}{2\pi} \int_{\Phi(p \cone \Gamma_5) \cap S_{\rho^*}(0)} \frac{1}{r} \md \sigma^{\mathbb{R}}\\
&=&\frac{1}{2\pi} \length_\mathbb{R}(\Phi(p \cone \Gamma_5) \cap S_1(0)),
\end{eqnarray*}
where $\rho^*=\cot \frac{1}{2}\rho$.

There are three cases. (From now on, we will consider $\mathbb{S}^n_+$ as a subset of $\mathbb{R}^n \cup \{\infty\}$ through the stereographic projection $\Phi$. For a convenience, we will omit $\Phi$ if it is not ambiguous.)

Case I. $p \in \Sigma \setminus \Gamma_5$.

Observe that $(p \cone \Gamma_5) \cap S_1(0)$ is a piecewise length-minimizing geodesic closed curve with $5$ vertices in the unit sphere in $\mathbb{R}^n$. Therefore Proposition \ref{prop:gamma5 in Sn} implies $\length_\mathbb{R}((p \cone \Gamma_5) \cap S_1(0)) \leq 4\pi$. Now we claim that the equality can not occur. If the equality holds, then all of the vertices of $(p \cone \Gamma_5) \cap S_1(0)$ lie in the same great circle $S$. This implies that all of the vertices of $\Phi(\Gamma_5)$ are in the plane $P$ containing the origin and $S$. Since each geodesic segment (this is not a line segment, in general) of $\Phi(\Gamma_5)$ minimizes length, $\Gamma_5 \subset P$ and thus $(p \cone \Gamma_5) \cap S_1(0) \subset P$ also. This implies that $(p \cone \Gamma_5) \cap S_1(0)$ is a geodesic circle $S$ as a set and winds $S$ twice. But it is a contradiction since $\Phi(\Gamma_5)$ is a Jordan curve. Therefore we have
\begin{displaymath}
\Theta_\Sigma (p) < 2.
\end{displaymath}

Case II. $p \in \Gamma_5 \setminus \{\text{vertices}\}$.

$T_p\Gamma_5$ intersects $S_1(0)$ at two points $a,b$ which are antipodal. And thus $(p \cone \Gamma_5) \cap S_1(0)$ is a piecewise length-minimizing geodesic curve with $5$ vertices and $a,b$ are end points of $(p \cone \Gamma_5) \cap S_1(0)$. Proposition \ref{prop:gamma5 in Sn} implies that $\length_\mathbb{R}((p \cone \Gamma_5) \cap S_1(0)) \leq 3\pi$. In the same manner to the first case, we can show that the equality can not occur. Therefore we have
\begin{displaymath}
\Theta_\Sigma (p) < \frac{3}{2}.
\end{displaymath}

Case III. $p$ is a vertex of $\Gamma_5$.

Let $\theta$ be an exterior angle of $\Gamma_5$ at $p$. Then Proposition \ref{prop:gamma5-2} implies that $\length_\mathbb{R}((p \cone \Gamma_5) \cap S_1(0)) \leq 2\pi+(\pi-\theta)=3\pi-\theta$. In a similar way to the first two cases, we have
\begin{displaymath}
\Theta_\Sigma (p) < \frac{3}{2}-\frac{\theta}{2\pi}.
\end{displaymath}
These three density estimates complete the proof.
\end{proof}

We can also prove the following in $\mathbb{H}^n$.

\begin{thm} \label{thm:gamma5 in Hn}
Let $\Gamma_5$ be a piecewise geodesic Jordan curve with $5$ vertices in $\mathbb{H}^n$. Then any minimal surface $\Sigma$ bounded by $\Gamma_5$ is embedded.
\end{thm}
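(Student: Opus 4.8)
The plan is to reduce the hyperbolic statement to the spherical length estimates already established, exploiting that $i(\mathbb{H}^n)=\infty$ so that, in contrast to $\mathbb{S}^n_+$, no bound on the size of $\Gamma_5$ is required. As emphasized in the introduction, it suffices to prove the density bound $\Theta_\Sigma(p)<2$ at every $p\in\Sigma$: by the monotonicity/density argument of \cite{EWW} and \cite{CG2} this rules out both branch points and self-intersections, so $\Sigma$ is embedded.

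First I would fix $p\in\Sigma$. By the density comparison Proposition \ref{prop:density comparison}(2), which holds in $\mathbb{H}^n$ with no restriction on $\dist(p,\Gamma_5)$, we have $\Theta_\Sigma(p)<\Theta_{p\cone\Gamma_5}(p)$ unless $\Sigma$ is totally geodesic; in the latter case $\Theta_\Sigma(p)=1<2$ and we are done. So it is enough to bound $\Theta_{p\cone\Gamma_5}(p)$. Taking $k=1$ in (\ref{eqn:angle2}), so that $(k+1)\omega_{k+1}=2\pi$, and using that the metric induced on the geodesic sphere $S_\rho(p)$ is $\sinh^2\rho$ times the round metric of the unit tangent sphere $\mathbb{S}^{n-1}\subset T_p\mathbb{H}^n$, the angle $A^1(\Gamma_5,p)=\length((p\cone\Gamma_5)\cap S_\rho(p))/\sinh\rho$ equals the length of the radial projection $\widehat{\Gamma}_5$ of $\Gamma_5$ onto $\mathbb{S}^{n-1}$; hence $\Theta_{p\cone\Gamma_5}(p)=\frac{1}{2\pi}\length(\widehat{\Gamma}_5)$. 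The geometric input that makes this useful is that each geodesic edge of $\Gamma_5$ spans with $p$ a totally geodesic $\mathbb{H}^2$, so its radial projection is a great-circle arc; thus $\widehat{\Gamma}_5$ is a closed geodesic polygonal curve in $\mathbb{S}^{n-1}$ with at most $5$ vertices, length-minimizing on each edge since $\Gamma_5$ has no cusp.

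Then I would run the three-case analysis of Theorem \ref{thm:gamma5 in Sn} essentially verbatim. If $p\in\Sigma\setminus\Gamma_5$, then $\widehat{\Gamma}_5$ is a closed geodesic polygon with $5$ vertices and Proposition \ref{prop:gamma5 in Sn} gives $\length(\widehat{\Gamma}_5)\le 4\pi$; equality would force all vertices onto one great circle, hence $\Gamma_5$ into a totally geodesic $\mathbb{H}^2$ through $p$ with $\widehat{\Gamma}_5$ winding twice, contradicting that $\Gamma_5$ is a Jordan curve, so $\Theta_{p\cone\Gamma_5}(p)<2$. If $p$ lies in the interior of an edge, the two tangent directions of that edge project to antipodal points, so $\widehat{\Gamma}_5$ becomes an open geodesic polygon with $5$ vertices whose endpoints are at distance $\theta=\pi$; statement J(2) of Proposition \ref{prop:gamma5 in Sn} then gives $\length(\widehat{\Gamma}_5)\le 4\pi-\pi=3\pi$ and $\Theta_{p\cone\Gamma_5}(p)\le\frac32$. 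If $p$ is a vertex with exterior angle $\theta$, the two incident edges collapse to points at distance $\pi-\theta$ and $\widehat{\Gamma}_5$ becomes the arc $p_0 p_1 p_2 p_3$, so Proposition \ref{prop:gamma5-2} yields $\length(\widehat{\Gamma}_5)\le 2\pi+(\pi-\theta)=3\pi-\theta$ and $\Theta_{p\cone\Gamma_5}(p)\le\frac32-\frac{\theta}{2\pi}$. In every case $\Theta_\Sigma(p)<2$, which gives embeddedness.

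The step needing genuine care, rather than a direct citation, is the reduction in the second paragraph: verifying that each edge projects to an honest great-circle arc and that $A^1$ equals the projected length, together with the bookkeeping of how many vertices $\widehat{\Gamma}_5$ has and what the endpoint distance is when $p$ lies on an edge or at a vertex — including the degenerate positions where $p$ sits on the geodesic line extending some edge. Once this identification is secured, the hyperbolic theorem is formally identical to the interior argument of Theorem \ref{thm:gamma5 in Sn}, and the absence of any injectivity-radius constraint makes it, if anything, cleaner than the hemispherical case.
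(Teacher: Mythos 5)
Your proposal is correct and is essentially the paper's own proof: the paper disposes of Theorem \ref{thm:gamma5 in Hn} with the single line that it is ``similar to Theorem \ref{thm:gamma5 in Sn}'', and your argument is precisely that transfer --- Proposition \ref{prop:density comparison}(2) (now with no restriction on $\dist(p,\Gamma_5)$, since $i(\mathbb{H}^n)=\infty$), the identification of $A^1(\Gamma_5,p)/2\pi$ with the length of the radial projection onto the unit tangent sphere (your intrinsic computation on $S_\rho(p)$ of radius $\sinh\rho$ being equivalent to the paper's Poincar\'e-model calculation as in Proposition \ref{prop:density estimate}), followed by the same three-case analysis via Propositions \ref{prop:gamma5-2} and \ref{prop:gamma5 in Sn}. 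Your attention to the degenerate positions of $p$ on the line extending an edge is a reasonable extra precaution that the paper leaves implicit.
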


\begin{proof}
The proof is similar to Theorem \ref{thm:gamma5 in Sn}.
\end{proof}

\begin{cor} \label{cor:star}
There does not exist a star-shaped knotted piecewise geodesic Jordan curve in $\mathbb{H}^3$ or in a geodesic ball of radius $<\frac{\pi}{4} \subset \mathbb{S}^3_+$ which consists of $5$ geodesic segments {\rm (See Figure 1 in Introduction)}.
\end{cor}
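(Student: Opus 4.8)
The plan is to derive the corollary directly from the embeddedness Theorems \ref{thm:gamma5 in Hn} and \ref{thm:gamma5 in Sn}, using the elementary topological fact that a Jordan curve which bounds an embedded disk is unknotted. Thus it suffices to produce, for any $\Gamma_5$ of the stated type, an embedded minimal disk $\Sigma$ with $\partial\Sigma=\Gamma_5$; the existence of such a disk then forbids $\Gamma_5$ from being knotted, and a star-shaped curve made of five geodesic segments is in particular such a $\Gamma_5$.

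First I would solve the Plateau problem for $\Gamma_5$ (Douglas--Rad\'o--Morrey) to obtain a disk-type branched minimal immersion $\Sigma$ spanning $\Gamma_5$. In $\mathbb{H}^3$ this solution exists with no ambient restriction. In $\mathbb{S}^3_+$, because $r<\frac{\pi}{4}<\frac{\pi}{2}$ the geodesic ball $B_r$ is convex, so the convex hull property places $\Sigma$ inside $B_r$; this is exactly the containment hypothesis $\Sigma\subset B_r$ required to invoke Theorem \ref{thm:gamma5 in Sn}.

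Applying Theorem \ref{thm:gamma5 in Hn} (resp. Theorem \ref{thm:gamma5 in Sn}) then gives that $\Sigma$ is embedded. Here I would stress that the proofs of those theorems actually furnish the density bound $\Theta_\Sigma(p)<2$ at every point $p\in\Sigma$; since an interior branch point or a self-intersection would force density at least $2$, this single estimate simultaneously excludes branch points and self-intersections. Hence $\Sigma$ is a genuine embedded disk whose boundary is the Jordan curve $\Gamma_5$. Consequently $\Gamma_5$ bounds an embedded disk in $\mathbb{H}^3$ (resp. in $\mathbb{S}^3_+\subset\mathbb{S}^3$), so by definition $\Gamma_5$ is an unknot; a star-shaped knotted such curve therefore cannot exist.

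The step I expect to be the main obstacle is not analytic but the clean passage from the density estimate to the topological conclusion: one must confirm that the disk-type Plateau solution, once known to satisfy $\Theta_\Sigma(p)<2$, is free of branch points and hence a smoothly embedded disk that genuinely witnesses the unknotting. In the hemisphere case the only additional care needed is to keep the Plateau solution inside $B_r$, for which the convexity of a ball of radius $<\frac{\pi}{4}$ (and hence $<\frac{\pi}{2}$) is precisely what is used.
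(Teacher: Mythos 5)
Your proposal is correct and is essentially the paper's own argument: the paper's two-line proof likewise combines Theorem \ref{thm:gamma5 in Hn} and Theorem \ref{thm:gamma5 in Sn} with the classical topological fact, stated there in contrapositive form, that a knotted Jordan curve cannot bound an embedded minimal disk without self-intersections. You simply make explicit the ingredients the paper leaves implicit --- Plateau existence, the convex-hull containment $\Sigma\subset B_r$ needed to invoke Theorem \ref{thm:gamma5 in Sn}, and the observation that the density bound $\Theta_\Sigma(p)<2$ rules out branch points as well as self-intersections.
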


\begin{proof}
Note that a minimal disk in a $3$-dimensional manifold bounded by a knotted Jordan curve always has not only self-intersections but also a branch point. It is a direct consequence of Theorem \ref{thm:gamma5 in Hn} and Theorem \ref{thm:gamma5 in Sn}.
\end{proof}

In Corollary \ref{cor:star}, $n=5$ is the critical number for a piecewise geodesic Jordan curve $\Gamma_n$ with $n$ vertices to be unknotted. It is not difficult to find a knotted piecewise geodesic Jordan curve $\Gamma_6$ with $6$ vertices (Figure 2).

\begin{figure}[h]
\begin{center}
\includegraphics[height=5cm, width=5cm]{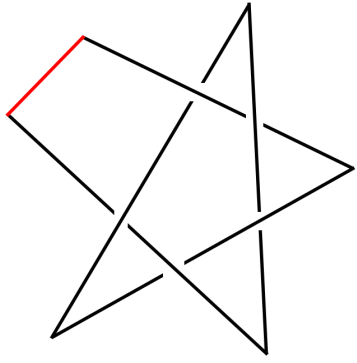}
\end{center}
\caption[Five is critical! Existence of a knotted piecewise geodesic Jordan curve $\Gamma_6$ with $6$ vertices]{Five is critical! Existence of a knotted piecewise geodesic Jordan curve $\Gamma_6$ with $6$ vertices.}
\end{figure}

\section{Embeddedness of proper minimal submanifolds in $\mathbb{H}^n$} \label{sect:hyperbolic}
Let us sketch two models of hyperbolic space $\mathbb{H}^n$.
First, $\mathbb{H}^n$ is isometrically immersed onto the hyperboloid ${\rm H}$, $-x_0^2+x_1^2+\cdots+x_n^2=-1$, $x_0>0$, in $\mathbb{R}^{n+1}$ endowed with the Minkowski metric, $\md s_{\mathbb{L}}^2=-\md x_0^2+\md x_1^2+\cdots+\md x_n^2$, which is denoted by $\mathbb{L}^{n+1}$.
Let $\rho$ be the distance in $\mathbb{H}^n$ measured from $(1,0,\cdots,0)$.
Then $|x|=\sqrt{x_1^2+\cdots+x_n^2}=\sinh \rho$ and $\md |x|=\cosh \rho \cdot \md \rho$.
Second, one can consider $\mathbb{H}^n$ as the unit ball ${\rm
B}^n=\{(u_1,\cdots,u_n)|u_1^2+\cdots+u_n^2<1\} \subset
\mathbb{R}^n$.
Define a mapping $\Psi:{\rm H} \subset \mathbb{L}^{n+1} \rightarrow {\rm B}^n \subset \mathbb{R}^n$ by
\begin{displaymath}
\Psi\left(x_0,x_1,\cdots,x_n\right)=\left(u_1,\cdots,u_n \right) \hspace{3mm} \text{where } u_i=\frac{x_i}{1+x_0},\, i=1,\cdots,n.
\end{displaymath}
Actually $\Psi$ is known as the stereographic projection of ${\rm H}$ onto the unit ball in the hyperplane
$\{x_0=0\}$.
Since $\Psi$ is 1-1 and onto, there is the inverse $\Psi^{-1}: {\rm B}^n \rightarrow
{\rm H}$ such that $(u_1,\cdots,u_n) \mapsto (x_0,x_1,\cdots,x_n)$ where $x_0=\frac{1+r^2}{1-r^2}$,
$x_i=\frac{2u_i}{1-r^2}$, $i=1,\cdots,n$.
Then $\Psi^{-1}$ induces on ${\rm
B}^n$ the metric $\md s_{\rm B}^2=\frac{4\md
s_\mathbb{R}^2}{(1-r^2)^2}$ where $\md s_\mathbb{R}^2=\md u_1^2+\cdots+\md u_n^2$ is the Euclidean metric and $r^2=u_1^2+\cdots+u_n^2$.
Such a ball ${\rm
B}^n$ is called the Poincar{\' e} ball, one of the models of $\mathbb{H}^n$.
Note that
\begin{displaymath}
\sinh \rho=\frac{2r}{1-r^2},\, \rho=\log \frac{1+r}{1-r} \hspace{3mm} \text{and } r=\tanh \frac{1}{2} \rho.
\end{displaymath}

If we employ the Poincar{\' e} ball, then the ideal boundary $\partial_{\infty} \Sigma$ of $\Sigma \subset \mathbb{H}^n$ is defined to be the set of all accumulation points of $\Sigma$ in $\mathbb{S}^{n-1}$. Here $\partial_\infty \mathbb{H}^n$, the ideal boundary of $\mathbb{H}^n$, is identified with $\mathbb{S}^{n-1}$.

\begin{df}
Let $\Gamma$ be an $(m-1)$-dimensional submanifold in an $n$-dimensional Riemannian manifold $M$ and let $p$ be a point of $M$. The \textit{$m$-dimensional cone over $\Gamma$ with the vertex $p$} is defined as the union of the geodesic segment from $p$ to $q$, over all $q \in \Gamma$ and is denoted by $p\cone \Gamma$.
\end{df}

From now on, $\vol_\mathbb{R}(\Gamma)$ denotes the volume of $\Gamma$ in $\mathbb{S}^{n-1} \subset \mathbb{R}^n$. In particular $\vol_\mathbb{R}(\mathbb{S}^{m-1})=m\omega_m$, where $\omega_m$ is the volume of the $m$-dimensional unit ball in $\mathbb{R}^m$.

\begin{prop} {\rm(Density estimation)} \label{prop:density estimate}
Let $\Gamma$ be an $(m-1)$-dimensional compact submanifold of $\mathbb{S}^{n-1}$. Let $\Sigma$ be an $m$-dimensional
proper minimal submanifold in $\mathbb{H}^n$ with
$\partial_{\infty} \Sigma = \Gamma \subset
\mathbb{S}^{n-1}=\partial_\infty \mathbb{H}^n$ and let $q$ be a point of $\Sigma$. Let $\psi$ be an isometry of $\mathbb{H}^n$ such that $\psi(q)=(1,0,\cdots,0) \in {\rm H}$ in the hyperboloid model of $\mathbb{H}^n$. Then
\begin{equation} \label{eqn:hyp density}
m \omega_m \Theta_\Sigma(q) \leq \vol_\mathbb{R}(\widetilde{\Gamma}),
\end{equation}
where $\widetilde{\Gamma}$ denotes the ideal boundary of $\psi(\Sigma)$.
If the equality holds, then $\overline{\Sigma}=q\cone \Gamma$.
\end{prop}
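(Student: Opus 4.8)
The plan is to prove the estimate by a monotonicity argument, comparing $\Sigma$ with the cone $q\cone\Gamma$ over its ideal boundary. First I would use the isometry $\psi$ to normalize the base point: replacing $\Sigma$ by $\psi(\Sigma)$, $q$ by the center $o=(1,0,\dots,0)$ of the ball model, and $\Gamma$ by $\widetilde{\Gamma}$, the whole statement is unchanged, because $\Theta_\Sigma(q)=\Theta_{\psi(\Sigma)}(o)$ is an isometry invariant and $\widetilde{\Gamma}$ is by definition the ideal boundary of $\psi(\Sigma)$. Thus it suffices to show, for a proper minimal $\Sigma^m$ through the center $o$ with ideal boundary $\Gamma\subset\mathbb{S}^{n-1}$, that $m\omega_m\Theta_\Sigma(o)\le\vol_\mathbb{R}(\Gamma)$, with equality only for cones; transporting the conclusion back by $\psi^{-1}$ then gives the stated form.

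Next I would introduce the monotone quantity. Writing $\rho=\dist(o,\cdot)$ and $V(\rho)=\vol(\Sigma\cap B_\rho(o))$, I consider
\[
D(\rho)=\frac{V(\rho)}{\int_0^\rho(\sinh t)^{m-1}\,\md t}.
\]
Using $\mathrm{Hess}\,\rho=\coth\rho\,(g-\md\rho\otimes\md\rho)$ in $\mathbb{H}^n$, so that $\mathrm{div}_\Sigma\nabla\rho=\coth\rho\,(m-|\nabla^\Sigma\rho|^2)$, together with the minimality of $\Sigma$ applied through the first variation to the radial field $\nabla\rho$ (exactly the monotonicity computation underlying \cite{CG1,CG2}), one finds that $D$ is non-decreasing in $\rho$ and is constant if and only if $\Sigma$ is a minimal cone with vertex $o$. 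Since $\sinh t\sim t$ near $0$, the coarea formula gives $\lim_{\rho\to0^+}D(\rho)=m\omega_m\Theta_\Sigma(o)$, the left side of (\ref{eqn:hyp density}).

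The crux is to identify $\lim_{\rho\to\infty}D(\rho)$. In the ball model the cone $o\cone\Gamma$ over the ideal boundary is $\{s\xi:\xi\in\Gamma,\,0\le s<1\}$, and since its slice at geodesic radius $t$ carries the induced metric $(\sinh t)^2\,\md\xi^2$, that slice has $(m-1)$-volume $(\sinh t)^{m-1}\vol_\mathbb{R}(\Gamma)$; hence $\vol\big((o\cone\Gamma)\cap B_\rho\big)=\vol_\mathbb{R}(\Gamma)\int_0^\rho(\sinh t)^{m-1}\,\md t$, so the cone has $D\equiv\vol_\mathbb{R}(\Gamma)$. The hard part will be to show $\lim_{\rho\to\infty}D(\rho)\le\vol_\mathbb{R}(\Gamma)$ for $\Sigma$ itself. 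Here I would use properness: since $\overline{\Sigma}=\Sigma\cup\Gamma$, the submanifold is asymptotic to $o\cone\Gamma$ near infinity, so $|\nabla^\Sigma\rho|\to1$ and the rescaled slices $(\sinh\rho)^{-1}(\Sigma\cap S_\rho(o))$ converge to $\Gamma$ in $\mathbb{S}^{n-1}$. Combined with L'H\^opital, giving $\lim_{\rho\to\infty}D(\rho)=\lim_{\rho\to\infty}V'(\rho)/(\sinh\rho)^{m-1}$, and the coarea identity $V'(\rho)=\int_{\Sigma\cap S_\rho(o)}|\nabla^\Sigma\rho|^{-1}$, this yields $\lim_{\rho\to\infty}D(\rho)=\vol_\mathbb{R}(\Gamma)$. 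Making this boundary asymptotics rigorous, rather than invoking only a crude volume-growth comparison, is the main obstacle, and it is precisely where properness and the regularity of $\Gamma$ as an ideal boundary are needed.

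Finally, chaining the two limits through the monotonicity of $D$ gives
\[
m\omega_m\Theta_\Sigma(o)=\lim_{\rho\to0^+}D(\rho)\le\lim_{\rho\to\infty}D(\rho)=\vol_\mathbb{R}(\Gamma),
\]
which is (\ref{eqn:hyp density}). If equality holds, then $D$ is constant, so by the rigidity in the monotonicity formula $\Sigma$ is a cone with vertex $o$; since its ideal boundary is $\Gamma$ this forces $\overline{\Sigma}=o\cone\Gamma$, and applying $\psi^{-1}$ returns the stated equality case $\overline{\Sigma}=q\cone\Gamma$.
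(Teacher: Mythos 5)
Your architecture is sound and is genuinely different from the paper's. You use an Anderson-type volume-ratio monotonicity for $D(\rho)=V(\rho)/\int_0^\rho(\sinh t)^{m-1}\,\md t$, whereas the paper takes the Green's function $G$ of $\mathbb{H}^m$ with $G'(\rho)=\sinh^{1-m}\rho$, uses the Choe--Gulliver identity $\triangle_\Sigma G(\rho)=m\,\cosh\rho\,\sinh^{-m}\rho\,(1-|\nabla_\Sigma\rho|^2)\geq 0$ from \cite{CG1}, integrates over the annulus $\Sigma\cap B_R(q)\setminus B_\epsilon(q)$, and applies the divergence theorem: the inner flux tends to $-m\omega_m\Theta_\Sigma(q)$, and the outer flux is bounded by $\int_{\Sigma\cap\partial B_R(q)}\sinh^{1-m}\rho$, which via the conformal factor of the Poincar{\'e} ball ($\sinh\rho=\frac{2r}{1-r^2}$) is exactly the Euclidean volume, counted with multiplicity, of the radial projection of the slice onto $\mathbb{S}^{n-1}$, converging to $\vol_\mathbb{R}(\widetilde{\Gamma})$. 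Your normalization by $\psi$, the limit $\lim_{\rho\to0^+}D(\rho)=m\omega_m\Theta_\Sigma(o)$, the cone computation $D\equiv\vol_\mathbb{R}(\Gamma)$, and the rigidity-via-constancy argument (matching the paper's conclusion $|\nabla_\Sigma\rho|\equiv 1$, hence a cone with vertex $q$) are all fine.

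The genuine gap is at the step you yourself flagged, and it is worth seeing that your route requires strictly more there than the paper's does. By the coarea formula, $V'(\rho)=\int_{\Sigma\cap S_\rho(o)}|\nabla_\Sigma\rho|^{-1}\geq\vol(\Sigma\cap S_\rho(o))$, so the coarea factor works against you: to get $\lim_{\rho\to\infty}V'(\rho)/(\sinh\rho)^{m-1}\leq\vol_\mathbb{R}(\Gamma)$ you need both convergence of the rescaled slice volumes and $|\nabla_\Sigma\rho|\to 1$ in at least an integrated sense on slices. Neither follows from properness ($\overline{\Sigma}=\Sigma\cup\Gamma$) alone, as you assert: Hausdorff convergence of the rescaled slices to $\Gamma$ only yields lower semicontinuity of volume, $\liminf\geq\vol_\mathbb{R}(\Gamma)$, which is the wrong direction, and small-scale oscillation near the ideal boundary is compatible with properness while inflating both $V'$ and the slice volume. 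The paper's flux formulation sidesteps the gradient issue entirely, because its outer boundary term carries $\frac{\partial\rho}{\partial\nu}$ and the trivial bound $\left|\frac{\partial\rho}{\partial\nu}\right|\leq 1$ is used in the favorable direction, leaving only the projected-volume convergence to justify. Two repairs inside your framework: either replace $D$ by the flux $\int_{\Sigma\cap\partial B_\rho}\sinh^{1-m}\rho\,\frac{\partial\rho}{\partial\nu}$, monotone by $\triangle_\Sigma G\geq 0$, which essentially reproduces the paper's proof; or exploit that $\lim_{\rho\to\infty}D(\rho)$ exists by monotonicity, so it suffices to evaluate $V'/W'$ along a sequence $\rho_j\to\infty$ chosen using the finiteness of $\int_\Sigma\triangle_\Sigma G(\rho)$, which supplies the integrated decay of $1-|\nabla_\Sigma\rho|^2$ that your pointwise claim lacks.
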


The properness of $\Sigma$ in Proposition \ref{prop:density estimate} can be replaced by the hypothesis that $\overline{\Sigma}=\Sigma \cup \Gamma$.

\begin{proof}
Let $G(x)$ be Green's function of $\mathbb{H}^m$, whose derivative is $\sinh^{1-m} x$ for $0<x<\infty$, where $x$ is the distance from a fixed point in $\mathbb{H}^m$. Choe
and Gulliver \cite{CG1} proved that if $\Sigma$ is an
$m$-dimensional minimal submanifold of $\mathbb{H}^n$ and $q \in
\Sigma$ then $G \circ \rho$ is subharmonic on $\Sigma \setminus
\{q\}$ and is harmonic except $q$ if $\Sigma$ is a cone with the vertex $q$. That is,
\begin{equation*}
\triangle_{\Sigma} G(\rho) = m \frac{\cosh \rho}{\sinh^m \rho} \left(1-|\nabla_{\Sigma}
\rho|^2 \right) \geq 0,
\end{equation*}
where $\rho(\cdot)$ is the distance from $q$ in
$\mathbb{H}^n$.

Let $B_r(q)$ denote the geodesic ball in $\mathbb{H}^n$ of radius
$r$ centered at $q$. Integrate $\triangle_{\Sigma} G(\rho)$ over $\Sigma \cap B_R(q)
\setminus B_\epsilon(q)$ for small $\epsilon >0$ and large $R$ and then apply the divergence theorem. Since $\partial({\Sigma \cap B_R(q) \setminus B_\epsilon(q)})=({\Sigma \cap \partial B_\epsilon (q)}) \cup ({\Sigma \cap \partial B_R (q)})$, it implies
\begin{equation*}
0 \leq \int_{\Sigma \cap B_R(q) \setminus B_\epsilon(q)} \triangle_{\Sigma}
G (\rho) = \int_{\Sigma \cap \partial B_\epsilon (q)} \nabla_{\Sigma} G (\rho)\cdot \nu
+ \int_{\Sigma \cap \partial B_R (q)} \nabla_{\Sigma} G (\rho)\cdot \nu,
\end{equation*}
where $\nu$ is the outward unit conormal vector to $\Sigma$.

Recall that $\nabla_{\Sigma} G(\rho)=\sinh^{1-m} \rho \cdot \nabla_{\Sigma} \rho$. Hence
\begin{equation} \label{eqn:hyp est1}
-\int_{\Sigma \cap \partial B_\epsilon (q)} \frac{1}{\sinh^{m-1} \rho} \frac{\partial \rho}{\partial \nu} \leq \int_{\Sigma \cap \partial B_R
(q)} \frac{1}{\sinh^{m-1} \rho} \frac{\partial \rho}{\partial \nu}.
\end{equation}
Along ${\Sigma \cap \partial B_\epsilon (q)}$ $\frac{\partial \rho}{\partial \nu} \rightarrow -1$ uniformly and
\begin{displaymath}
\frac{\vol({\Sigma \cap \partial B_\epsilon (q)})}{\sinh^{m-1}
\epsilon} \rightarrow m \omega_m \Theta_\Sigma (q) \hspace{3mm} \text{as }\epsilon \rightarrow 0.
\end{displaymath}
It follows that
\begin{equation*}
    \lim_{\epsilon \rightarrow 0} \int_{\Sigma \cap \partial B_\epsilon (q)} \frac{1}{\sinh^{m-1} \rho} \frac{\partial \rho}{\partial \nu} = - m \omega_m \Theta_\Sigma (q).
\end{equation*}
Then (\ref{eqn:hyp est1}) yields
\begin{equation} \label{eqn:hyp est2}
m \omega_m \Theta_\Sigma (q) \leq \int_{\Sigma \cap \partial
B_R (q)} \frac{1}{\sinh^{m-1} \rho} \frac{\partial \rho}{\partial \nu}
\,\,\,.
\end{equation}

Let us write $\md \sigma^{\mathbb{H}}$ as
the hyperbolic volume form of $\Sigma \cap
\partial B_R (q)$ in the hyperboloid and Poincar{\' e} ball model in common.
Since $|\frac{\partial \rho}{\partial \nu}| \leq 1$,
\begin{equation*}
\int_{\Sigma \cap \partial B_R (q)} \frac{1}{\sinh^{m-1} \rho} \frac{\partial \rho}{\partial \nu} \leq \int_{\Sigma \cap \partial B_R (q)}
\frac{1}{\sinh^{m-1} \rho}\,\,\,.
\end{equation*}

On the other hand, we can obtain some interesting equalities as follows
\begin{eqnarray} \label{eqn:hyp est3}
\int_{\Sigma \cap \partial B_R (q)} \frac{1}{\sinh^{m-1} \rho}
\,\md \sigma^{\rm \mathbb{H}} &=& \int_{\psi(\Sigma) \cap \partial B_R (\psi(q))}
\left(\frac{1}{\sqrt{x_0^2-1}}\right)^{m-1}\md \sigma^{\rm \mathbb{H}}\\
&=& \int_{\psi(\Sigma) \cap \partial \widetilde{B}_{R^*} (0)} \frac{1}{r^{m-1}}
{\left( \frac{1-r^2}{2} \right)}^{m-1} \md \sigma^{\rm
\mathbb{H}}\nonumber\\ &=& \int_{\psi(\Sigma) \cap \partial \widetilde{B}_{R^*} (0)}
\frac{1}{r^{m-1}} \,\md \sigma^{\mathbb{R}},\nonumber
\end{eqnarray}
where $R^*={\tanh \frac{1}{2} R}$, $\widetilde{B}_{R^*} (0)$ is a geodesic ball in ${\rm B}^n \subset \mathbb{R}^n$ centered at the origin and $\md \sigma^{\mathbb{R}}$ is the volume form of ${\psi(\Sigma) \cap \partial \widetilde{B}_{R^*} (0)}$ in $\mathbb{R}^n$. The last equality holds because $\md s_\mathbb{H}^2$ is conformal to $\md s_\mathbb{R}^2$.

Note that the last integral equals just $\vol({\psi(\Sigma) \cap \partial \widetilde{B}_{R^*} (0)})$ divided by ${R^*}^{m-1}$. In fact it is the volume of the radial projection of $\psi(\Sigma) \cap
\partial \widetilde{B}_{R^*} (0)$ in $\mathbb{R}^n$ onto $\mathbb{S}^{n-1}$. And it converges to $\vol_\mathbb{R}(\partial_\infty \psi(\Sigma))$ as $R^* \rightarrow 1$, that is, as $R \rightarrow \infty$. Hence from (\ref{eqn:hyp est2}) and
(\ref{eqn:hyp est3}) we have (\ref{eqn:hyp density}).

If equality holds, then $\triangle_{\Sigma} G(\rho)$ vanishes on the whole $\Sigma$ with respect to the fixed point $q \in \Sigma$. It implies that $|\nabla_{\Sigma} \rho| \equiv 1$ on $\Sigma$. Let $s \in \Sigma$ and let $\gamma$ be a geodesic such that $\gamma(0)=q$ and $\gamma(1)=s$. Then $\gamma'(1) \in T_s\Sigma$ for all $s \in \Sigma$ because $\nabla_{\Sigma} \rho \in T_s\Sigma$. It then follows that $\Sigma$ is a cone with the vertex $q$.
\end{proof}

\begin{df}
Let $\Gamma$ be an $(m-1)$-dimensional compact submanifold of
$\mathbb{S}^{n-1}$. Let
{M{\"{o}}b}($\mathbb{S}^{n-1}$) be the group of all {M{\"{o}}bius}
transformations of $\mathbb{S}^{n-1}$. The {\it M{\"{o}}bius volume} of $\Gamma$
is defined to be
\begin{equation*}
\widetilde{\vol}(\Gamma)=\sup\{{\vol}_\mathbb{R} (g \circ \Gamma)
\,|\, {g \in {\text{\rm M{\"{o}}b}}(\mathbb{S}^{n-1})}\}.
\end{equation*}
\end{df}

\begin{Rmk}
According to the definition of Li and Yau \cite{LY}, the M{\"{o}}bius volume of $\Gamma$ is the same as the $(n-1)$-conformal
volume of the inclusion of $\Gamma$ into
$\mathbb{S}^{n-1}$.
\end{Rmk}

\begin{prop} \label{prop:example}
Let $\Gamma$ be an $(m-1)$-dimensional compact submanifold of $\mathbb{S}^{n-1}$. Then $\widetilde{\vol}(\Gamma) \geq m\omega_m$. And equality holds if $\Gamma$ is an $(m-1)$-dimensional sphere. In particular, if $\Gamma$ is a closed curve in $\mathbb{S}^2$, then $\widetilde{\vol}(\Gamma) = 2\pi$ if and only if $\Gamma$ is a circle.
\end{prop}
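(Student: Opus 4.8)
The plan is to split the statement into three parts: the universal lower bound $\widetilde{\vol}(\Gamma)\ge m\omega_m$, the assertion that a round $(m-1)$-sphere realizes equality, and the rigidity statement for closed curves in $\mathbb{S}^2$.

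\emph{Lower bound.} First I would obtain $\widetilde{\vol}(\Gamma)\ge m\omega_m$ by a conformal blow-up at a smooth point. Since $\Gamma$ is a smooth compact submanifold, fix $p\in\Gamma$ and let $\pi$ be the stereographic projection of $\mathbb{S}^{n-1}$ from the antipode $-p$, so that $\pi(p)=0$ and the round metric pulls back to $\frac{4\,\md s_\mathbb{R}^2}{(1+|u|^2)^2}$ on $\mathbb{R}^{n-1}$, exactly as in the sphere model described in the Introduction. Write $\hat\Gamma=\pi(\Gamma)$ and let $\Pi=d\pi_p(T_p\Gamma)$ be its tangent $(m-1)$-subspace through $0$. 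For $\lambda>0$ put $g_\lambda=\pi^{-1}\circ D_\lambda\circ\pi$, where $D_\lambda(u)=\lambda u$; each $g_\lambda$ is a M\"obius transformation of $\mathbb{S}^{n-1}$. Because $\hat\Gamma$ is tangent to $\Pi$ at the origin, the dilated sets $\lambda\hat\Gamma$ converge to $\Pi$ in the $C^1$ sense on every ball $B_R\subset\mathbb{R}^{n-1}$, so for each fixed $R$
\[
\vol_\mathbb{R}\left(g_\lambda(\Gamma)\right)\;\ge\;\int_{\lambda\hat\Gamma\cap B_R}\left(\frac{2}{1+|u|^2}\right)^{m-1}\md\sigma^{\mathbb{R}}.
\]
As $\lambda\to\infty$ the right-hand side converges to $\int_{\Pi\cap B_R}\left(\frac{2}{1+|u|^2}\right)^{m-1}\md\sigma^{\mathbb{R}}$, so $\liminf_{\lambda\to\infty}\vol_\mathbb{R}(g_\lambda(\Gamma))$ is at least this value; letting $R\to\infty$ it increases to $\int_{\Pi}\left(\frac{2}{1+|u|^2}\right)^{m-1}\md\sigma^{\mathbb{R}}$, which is precisely the volume of the unit $(m-1)$-sphere computed in the stereographic chart, namely $\vol_\mathbb{R}(\mathbb{S}^{m-1})=m\omega_m$ (indeed $\pi^{-1}(\Pi)\cup\{-p\}$ is the great $(m-1)$-sphere through $p$ and $-p$ tangent to $\Gamma$). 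Since $\vol_\mathbb{R}(g_\lambda(\Gamma))\le\widetilde{\vol}(\Gamma)$ for all $\lambda$, this yields $\widetilde{\vol}(\Gamma)\ge m\omega_m$. This coincides with the Li--Yau lower bound for the conformal volume, via the Remark above.

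\emph{Equality for spheres.} By definition $\widetilde{\vol}$ is invariant under the M\"obius group, and every round $(m-1)$-sphere is M\"obius-equivalent to a great one, so $\widetilde{\vol}(\Gamma)=\widetilde{\vol}(\mathbb{S}^{m-1})$ whenever $\Gamma$ is a round $(m-1)$-sphere. On the other hand any M\"obius image of $\mathbb{S}^{m-1}$ is again a round $(m-1)$-sphere, hence a geodesic sphere of some radius $r\in(0,\frac{\pi}{2}]$ inside a totally geodesic $\mathbb{S}^m\subset\mathbb{S}^{n-1}$, whose volume equals $(\sin r)^{m-1}m\omega_m\le m\omega_m$. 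Therefore $\widetilde{\vol}(\mathbb{S}^{m-1})=m\omega_m$, and combined with the lower bound this proves $\widetilde{\vol}(\Gamma)=m\omega_m$ for every round $(m-1)$-sphere.

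\emph{Rigidity for curves.} Here $m=2$, $n=3$ and $m\omega_m=2\pi$. The forward implication is immediate from the previous paragraph. For the converse, assume $\widetilde{\vol}(\Gamma)=2\pi$; then $\vol_\mathbb{R}(g(\Gamma))\le 2\pi$ for every M\"obius transformation $g$, and the blow-up above already saturates the bound, so $\lim_{\lambda\to\infty}\vol_\mathbb{R}(g_\lambda(\Gamma))=2\pi$ with no length escaping toward the projection center. I expect the upgrade of this ``no loss of mass'' into genuine rigidity to be the main obstacle: the local analysis at a single point only identifies the blow-up limit as a great circle and does not by itself force $\Gamma$ to be round. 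The clean route is to invoke the equality case of the Li--Yau conformal volume theorem: by the Remark, $\widetilde{\vol}(\Gamma)$ is the $(n-1)$-conformal volume of the inclusion $\Gamma\hookrightarrow\mathbb{S}^2$, and Li and Yau characterize the minimizers of conformal volume $\vol_\mathbb{R}(\mathbb{S}^1)=2\pi$ as exactly the M\"obius images of the standard circle, whence $\Gamma$ is a round circle. A self-contained alternative would first balance the center of mass of $\Gamma$ to the origin by a M\"obius normalization and then use that the simultaneous saturation of the blow-up at every point of $\Gamma$ leaves no room for $\Gamma$ to deviate from the limiting great circle; carrying out this balancing argument rigorously is precisely the delicate step.
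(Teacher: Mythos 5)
Your lower bound and the equality claim for round spheres are correct and essentially identical to the paper's own argument: blow up at a point of $\Gamma$ by M\"obius dilations so that the images converge to a great $(m-1)$-sphere, giving $\widetilde{\vol}(\Gamma)\geq m\omega_m$, and note that M\"obius images of a round sphere are round spheres, hence of volume at most $m\omega_m$. The genuine gap is in the converse direction for closed curves in $\mathbb{S}^2$, and you have in fact located it yourself: the ``saturation of the blow-up'' carries no information, because \emph{every} closed curve saturates it (that is exactly how the lower bound was proved), so it cannot distinguish circles from non-circles. Neither of your two proposed fixes closes this gap. The appeal to ``the equality case of the Li--Yau conformal volume theorem'' is not available: the paper's Remark cites \cite{LY} only for the definitional identification of $\widetilde{\vol}(\Gamma)$ with the $(n-1)$-conformal volume of the inclusion; Li and Yau prove lower bounds for conformal volume and discuss equality for minimal immersions of surfaces, but they do not characterize closed curves in $\mathbb{S}^2$ of conformal length $2\pi$ as circles --- that characterization is precisely what this proposition establishes, so invoking it is circular. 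Your ``self-contained alternative'' (M\"obius balancing of the center of mass plus simultaneous saturation at every point) is explicitly left unexecuted, and, as noted, the saturation input is vacuous.

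What is actually needed, since $\widetilde{\vol}$ is a supremum, is to exhibit for any non-circular $\Gamma$ a single M\"obius transformation $\varphi$ with $\length(\varphi(\Gamma))>2\pi$, directly contradicting $\widetilde{\vol}(\Gamma)=2\pi$. The paper does this as follows. First reduce to an embedded curve of length $\leq 2\pi$ by passing, if $\Gamma$ self-intersects, to an embedded closed subarc $\gamma$ with $\widetilde{\vol}(\gamma)\leq\widetilde{\vol}(\Gamma)$. Then Horn's theorem \cite{H} enters twice: a closed curve of length $\leq 2\pi$ lies in a closed hemisphere, and conversely a closed curve contained in no closed hemisphere has length $>2\pi$. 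If $\Gamma$ is embedded and not a circle, one takes in each complementary domain $D_i$ the largest circle $S_i\subset\text{Closure}(D_i)$ meeting $\Gamma$ in at least two points, and chooses $\varphi$ carrying $S_1,S_2$ to parallels of equal latitude in the northern and southern hemispheres; a short topological argument (a hemisphere boundary $\partial U$ containing $\varphi(\Gamma)$ would have to meet each $\varphi(S_i)$ in at most one point, contradicting $\#(\Gamma\cap S_i)\geq 2$) shows no closed hemisphere contains $\varphi(\Gamma)$, whence $\length(\varphi(\Gamma))>2\pi$. Without this global normalization step, or some equivalent replacement, your proof of the ``only if'' direction is incomplete.
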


\begin{proof}
Let $p$ be a point of $\Gamma$. There is $\varphi_\epsilon \in \text{M{\"{o}}b}(\mathbb{S}^{n-1})$ fixing $p$ and corresponding to the homothety $\widetilde{\varphi_\epsilon}$ in $\mathbb{R}^{n-1}$ which is defined as $\widetilde{\varphi_\epsilon}(x):=\frac{x}{\epsilon}$. Then $\varphi_\epsilon(\Gamma)$ converges to an $(m-1)$-dimensional great sphere as $\epsilon$ goes to $0$. Therefore $\widetilde{\vol}(\Gamma) \geq m\omega_m$.

Let $\Gamma$ be an $(m-1)$-dimensional sphere. Since the {M{\"{o}}b}ius transformation of $\mathbb{S}^{n-1}$ maps the spheres to the spheres, obviously we have $\widetilde{\vol}(\Gamma)=m\omega_m$.

Let $\Gamma$ be a closed curve in $\mathbb{S}^2$. We only need to
prove that if $\widetilde{{\rm Vol}}(\Gamma)=2\pi$ then $\Gamma$
is a circle. If $\Gamma$ has a self-intersection then we can take
a closed embedded subarc $\gamma$ from $\Gamma$ and clearly
$\widetilde{{\rm Vol}}(\gamma)\leq\widetilde{{\rm Vol}}(\Gamma)$.
Thus it is enough to consider an embedded $\Gamma$ of length
$\leq 2 \pi$. Then $\Gamma$ lies in a closed hemisphere by Horn's
theorem \cite{H}.

Suppose $\Gamma$ is not a circle. Let $D_1$, $D_2 \subset \mathbb{S}^2$
be the domains bounded by $\Gamma$. Then there is a largest circle
$S_i$ in Closure$(D_i)$, $i=1,2$, such that $\Gamma \cap S_i$
consists of at least two points. Choose $\varphi \in \text{
M{\"{o}}b}(\mathbb{S}^2)$ in such a way that $\varphi(S_1)$ and
$\varphi(S_2)$ become two parallels of equal
latitude in northern and southern hemisphere, respectively. Let
$A \subset \mathbb{S}^2$ be the annulus between $\varphi(S_1)$ and
$\varphi(S_2)$.

Now we claim that no closed hemisphere in $\mathbb{S}^2$ can
contain $\varphi(\Gamma)$. Suppose, on the contrary, that
$\varphi(\Gamma)$ lies in a closed hemisphere $U$. Since
$\varphi(\Gamma)$ is not null-homotopic in $A$, $U\cap A$ cannot
be simply connected, and so $\partial U$ lies in $A$ and is not
null-homotopic in $A$. Moreover, assuming that
$\varphi(S_1)\subset U$, we have $\partial U\cap
\varphi(S_2)\neq \emptyset$ since $\Gamma\cap S_i\neq \emptyset$
for $i=1,2$. However, we should note that $\partial U$ intersects
$\varphi(S_i)$ only at one point, for $i=1,2$. But this
contradicts the hypothesis that $\Gamma\cap S_i$ consists of at
least two points.

Therefore no closed hemisphere in $\mathbb{S}^2$ can contain
$\varphi(\Gamma)$ and hence it follows from \cite{H} that
Vol$(\varphi(\Gamma))>2\pi$. This is a contradiction to our
hypothesis $\widetilde{{\rm Vol}}(\Gamma)=2\pi$, and thus we can
conclude that $\Gamma$ is a circle.
\end{proof}

The following is a non-trivial example of a Jordan curve having $\widetilde{\vol}(\Gamma)<4\pi$.

\begin{ex}
Let $S$ and $S^\perp$ be great circles in $\mathbb{S}^2$ and let $p_1$ and $p_2$ be the intersection points of $S$ and $S^\perp$. We can choose four points $p_{ij}$ different from $p_1$ and $p_2$ as follows: $p_{ij} \in S$, $\dist(p_i, p_{ij})=\epsilon<\frac{\pi}{2}$ and $p_{1j}$ and $p_{2j}$ are antipodal for $i,j=1,2$. Then we have new piecewise smooth Jordan curve $\Gamma$ from $S \cup S^{\perp}$ removing length-minimizing geodesic segments connecting $p_{i1}$ and $p_{i2}$ and adding semi-circles $S_j$ of length $\pi$ with the end points $p_{1j}$ and $p_{2j}$ which intersects $S$ at a right angle for $i,j=1,2$

Let $\varphi$ be any {M{\"{o}}b}ius transformation of $\mathbb{S}^2$. Then $S$ and $S_j$ remain still part of circles under $\varphi$ and intersection angle between $\varphi(S)$ and $\varphi(S_j)$ is $\frac{\pi}{2}$ from the conformality , $j=1,2$. It is not difficult to show that
\begin{displaymath}
\length(\varphi(S_1))+\length(\varphi(S_2)) \leq 2\pi
\end{displaymath}
and thus $\length(\varphi(\Gamma))<4\pi$. It follows that $\widetilde{\vol}(\Gamma)<4\pi$.
\end{ex}

\begin{thm} \label{thm:hyperb}
Let $\Gamma$ be an $(m-1)$-dimensional compact submanifold of
$\mathbb{S}^{n-1}$. Let $\Sigma$ be an $m$-dimensional proper
minimal submanifold in $\mathbb{H}^n$ with $\partial_{\infty}
\Sigma = \Gamma$. If $\widetilde{\vol}(\Gamma) < 2m \omega_m$,
then $\Sigma$ is embedded. If $\widetilde{\vol}(\Gamma) = 2m \omega_m$, then $\Sigma$ is embedded unless it is a cone.
\end{thm}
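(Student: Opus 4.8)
The plan is to bound the density $\Theta_\Sigma(q)$ at every point $q\in\Sigma$ by $2$, strictly whenever possible, and then invoke the standard fact that a density below $2$ forces embeddedness. The link between the density estimate of Proposition \ref{prop:density estimate} and the M{\"{o}}bius volume is the observation that isometries of $\mathbb{H}^n$ restrict to M{\"{o}}bius transformations of the ideal boundary $\mathbb{S}^{n-1}=\partial_\infty\mathbb{H}^n$.

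First I would fix $q\in\Sigma$ and take the isometry $\psi$ of $\mathbb{H}^n$ with $\psi(q)=(1,0,\cdots,0)$ as in Proposition \ref{prop:density estimate}. In the Poincar{\'e} ball model $\psi$ induces a M{\"{o}}bius transformation $g\in{\text{\rm M{\"{o}}b}}(\mathbb{S}^{n-1})$ on the ideal boundary, and since taking ideal boundaries commutes with isometries, the ideal boundary $\widetilde{\Gamma}$ of $\psi(\Sigma)$ equals $g(\Gamma)$. Hence Proposition \ref{prop:density estimate} together with the definition of the M{\"{o}}bius volume as a supremum gives
\begin{equation*}
m\omega_m\,\Theta_\Sigma(q)\leq\vol_\mathbb{R}(\widetilde{\Gamma})=\vol_\mathbb{R}(g(\Gamma))\leq\widetilde{\vol}(\Gamma).
\end{equation*}

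If $\widetilde{\vol}(\Gamma)<2m\omega_m$, this chain yields $\Theta_\Sigma(q)<2$ for every $q\in\Sigma$. Because the density of a proper minimal submanifold is at least $2$ at any self-intersection or branch point by the monotonicity formula, the bound $\Theta_\Sigma(q)<2$ everywhere excludes such points and $\Sigma$ is embedded. In the borderline case $\widetilde{\vol}(\Gamma)=2m\omega_m$ the same chain gives only $\Theta_\Sigma(q)\leq2$. If the inequality is strict at every point, $\Sigma$ is embedded as before. Otherwise $\Theta_\Sigma(q_0)=2$ at some $q_0$, which forces every inequality in the chain to be an equality; in particular equality holds in Proposition \ref{prop:density estimate}, so $\overline{\Sigma}=q_0\cone\Gamma$ and $\Sigma$ is a cone.

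The step that needs the most care is the identification $\widetilde{\Gamma}=g(\Gamma)$: one must verify that the boundary action of a hyperbolic isometry $\psi$ is genuinely a M{\"{o}}bius transformation of $\mathbb{S}^{n-1}$ and that it carries $\partial_\infty\Sigma$ to $\partial_\infty\psi(\Sigma)$. The other routine-but-essential ingredient is the passage from the pointwise density bound to embeddedness, which rests on monotonicity and the fact that density strictly below $2$ rules out both transverse crossings and branching.
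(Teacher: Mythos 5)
Your proposal is correct and follows essentially the same route as the paper: the chain $m\omega_m\,\Theta_\Sigma(q)\leq\vol_\mathbb{R}(\widetilde{\Gamma})\leq\widetilde{\vol}(\Gamma)$ via Proposition \ref{prop:density estimate} and the identification ${\rm Isom}(\mathbb{H}^n)\simeq\text{M{\"o}b}(\mathbb{S}^{n-1})$, then $\Theta_\Sigma<2$ for embeddedness, with the borderline case settled by the equality statement $\overline{\Sigma}=q\cone\Gamma$. Your explicit verification that $\widetilde{\Gamma}=g(\Gamma)$ and your monotonicity justification of the density-to-embeddedness step merely spell out what the paper cites to \cite{R} and treats as standard.
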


\begin{proof}
Let $p$ be a point on $\Sigma$ in $\mathbb{H}^n$. In accordance with {Proposition \ref{prop:density estimate}}
\begin{equation} \label{eqn:sph1}
m \omega_m \Theta_\Sigma(p) \leq
\vol_\mathbb{R}(\widetilde{\Gamma}).
\end{equation}
Let Isom$(\mathbb{H}^n)$ be the group of all isometries of $\mathbb{H}^n$. Let {M{\"{o}}b}($\rm{B}^{n})$ be the group of all {M{\"{o}}b}ius transformations of $\rm{B}^{n}$. Then (Chapter 4 in \cite{R}),
\begin{equation*}
    \text{Isom}(\mathbb{H}^n) \simeq \text{M{\"{o}}b}(\rm{B}^{n}) \simeq \text{M{\"{o}}b}(\mathbb{S}^{n-1}).
\end{equation*}
Given $\psi \in \text{Isom}(\mathbb{H}^n)$, we may consider it as in $\text{M{\"{o}}b}(\mathbb{S}^{n-1})$. Then it follows that
\begin{equation}\label{eqn:sph2}
\vol_\mathbb{R}(\widetilde{\Gamma}) \leq \widetilde{\vol}(\Gamma) < 2m\omega_m.
\end{equation}
Therefore combining (\ref{eqn:sph1}) and (\ref{eqn:sph2}), we have
\begin{equation*}
\Theta_\Sigma(p) < 2,
\end{equation*}
and hence $\Sigma$ is embedded.

If $\widetilde{\vol}(\Gamma)=2m \omega_m$, then $\Theta_\Sigma(p) \leq 2$ for every $p \in \Sigma$. Let $q \in \Sigma$ be a point of density 2. Since equality holds in (\ref{eqn:hyp density}), then it is a cone with the vertex $q$. This completes the proof.
\end{proof}

\section{Embeddedness of proper minimal surfaces in $\hr$} {\label{sect:hr}}
\begin{prop} \label{prop:hr subh}
Let $\Sigma$ be a complete minimal surface in $\hr$ and $p \in \Sigma$. Let $\rho$ be the distance from $p$ in $\hr$. Then
\begin{equation*}
\triangle_{\Sigma} \log \rho \geq 0
\end{equation*}
on $\Sigma \setminus \{p\}$.
\end{prop}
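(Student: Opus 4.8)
The plan is to reduce the statement to a single pointwise algebraic inequality by computing the ambient Hessian of $\rho$ and then tracing it over the tangent plane of $\Sigma$, using minimality. Unlike the space-form setting of Proposition \ref{prop:density estimate}, where the comparison $|\nabla_\Sigma\rho|\le 1$ essentially suffices, here $\hr$ is not a space form, so I expect to need the full Hessian. Write $p=(p_0,t_0)\in\hr$ and, for $x=(y,t)$, set $r=\dist(y,p_0)$ in $\mathbb{H}^2$ and $s=t-t_0$, so that $\rho=\sqrt{r^2+s^2}$. Note $\rho$ is smooth on $\Sigma\setminus\{p\}$: the function $r^2$ is smooth on $\mathbb{H}^2$ and $\hr$ is a simply connected manifold of nonpositive curvature, hence Hadamard with empty cut locus. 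Since $\hr$ is a Riemannian product and $s$ is an affine coordinate on the $\mathbb{R}$ factor, $\text{Hess}\,s=0$, while $\text{Hess}\,r=\coth r\,(g_{\mathbb{H}^2}-\md r\otimes\md r)$ is the standard Hessian of hyperbolic distance. Applying the chain rule to $\rho=F(r,s)$ with $F=\sqrt{r^2+s^2}$ then gives the closed form
\begin{equation*}
\text{Hess}\,\rho=\frac{r\coth r}{\rho}(g_{\mathbb{H}^2}-\md r\otimes\md r)+\frac{s^2}{\rho^3}\md r\otimes\md r-\frac{rs}{\rho^3}(\md r\otimes\md s+\md s\otimes\md r)+\frac{r^2}{\rho^3}\md s\otimes\md s.
\end{equation*}

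Next I would use that $\Sigma$ is a minimal hypersurface in the $3$-manifold $\hr$. For any ambient function, minimality yields $\triangle_\Sigma\rho=\tr_{T_x\Sigma}(\text{Hess}\,\rho)=\triangle_{\hr}\rho-\text{Hess}\,\rho(\nu,\nu)$, where $\nu$ is a unit normal to $\Sigma$, and likewise $|\nabla_\Sigma\rho|^2=1-\langle\nabla\rho,\nu\rangle^2$ because $|\nabla\rho|=1$. Tracing the formula above gives $\triangle_{\hr}\rho=(1+r\coth r)/\rho$. Since $\triangle_\Sigma\log\rho=\rho^{-1}\triangle_\Sigma\rho-\rho^{-2}|\nabla_\Sigma\rho|^2$, the whole statement is equivalent to the pointwise inequality $\rho\,\triangle_\Sigma\rho-|\nabla_\Sigma\rho|^2\ge 0$ on $\Sigma\setminus\{p\}$.

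To finish, I would expand $\nu=\nu_r\partial_r+\nu_\theta\partial_\theta+\nu_t\partial_t$ in an orthonormal frame adapted to the radial, angular and vertical directions, substitute, and simplify using $r^2+s^2=\rho^2$. I expect the cross terms to collapse into a perfect square, leaving
\begin{equation*}
\rho\,\triangle_\Sigma\rho-|\nabla_\Sigma\rho|^2=\frac{2(r\nu_r+s\nu_t)^2}{\rho^2}+(r\coth r-1)(\nu_r^2+\nu_t^2).
\end{equation*}
Both terms are nonnegative: the first is a square, and the second is nonnegative because $r\coth r\ge 1$ for $r>0$. This establishes $\triangle_\Sigma\log\rho\ge 0$.

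The main obstacle I anticipate is precisely the algebra of this last step: verifying that the combination of Hessian entries and normal components assembles into the single square $2(r\nu_r+s\nu_t)^2/\rho^2$ plus the manifestly nonnegative hyperbolic contribution $(r\coth r-1)(\nu_r^2+\nu_t^2)$ is the crux, and it is where the positive curvature deficit of the hyperbolic factor enters through $r\coth r\ge 1$. A secondary point needing care is the locus $r=0$ (points sharing the $\mathbb{H}^2$-projection of $p$), where the radial/angular frame degenerates even though $\rho$ stays smooth; there the inequality is obtained by continuity, the term $(r\coth r-1)(\nu_r^2+\nu_t^2)$ tending to $0$ as $r\to 0$.
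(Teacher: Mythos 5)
Your proposal is correct, and I checked the two computational claims it hinges on: tracing your Hessian formula in the orthonormal frame $\{e_r,e_\theta,e_t\}$ does give $\overline{\triangle}\rho=(1+r\coth r)/\rho$, and the final identity holds exactly, since $\mathrm{Hess}\,\rho(\nu,\nu)=\frac{r\coth r}{\rho}\nu_\theta^2+\frac{(s\nu_r-r\nu_t)^2}{\rho^3}$ and $(s\nu_r-r\nu_t)^2+(r\nu_r+s\nu_t)^2=\rho^2(\nu_r^2+\nu_t^2)$ collapse everything to $\rho\,\triangle_\Sigma\rho-|\nabla_\Sigma\rho|^2=\frac{2(r\nu_r+s\nu_t)^2}{\rho^2}+(r\coth r-1)(\nu_r^2+\nu_t^2)$. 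However, you reach the key Hessian by a genuinely different route from the paper. The paper solves the geodesic equation explicitly in the Poincar{\'e} disk model, $\gamma(t)=(\tanh ct,0,\sqrt{1-4c^2}\,t)$, solves the Jacobi equation along $\gamma$ to obtain the metric $\md\rho^2+\rho^2\md\phi^2+\frac{\sinh^2(\rho\sin\phi)}{\sin^2\phi}\md\theta^2$ in exponential coordinates, and then computes $\overline{\nabla}^2\log\rho$ through Christoffel symbols, getting the diagonal matrix $\rho^{-2}\,\mathrm{diag}\left(-1,1,\rho\sin\phi\coth(\rho\sin\phi)\right)$; with $r=\rho\sin\phi$, $s=\rho\cos\phi$ this is precisely your tensor after rotating your $(e_r,e_t)$-plane to the frame $\{\nabla\rho,\,(se_r-re_t)/\rho\}$, so the two computations agree. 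You instead exploit the product structure directly: the Pythagorean relation $\rho^2=r^2+s^2$ in a Riemannian product, the standard $\mathrm{Hess}\,r=\coth r\,(g_{\mathbb{H}^2}-\md r\otimes\md r)$, $\mathrm{Hess}\,s=0$, and the chain rule. Both proofs then finish identically, via Lemma \ref{lem:laplacian} for a minimal hypersurface and the elementary bound $x\coth x\geq1$. What your route buys: no geodesic or Jacobi ODEs and no Christoffel computation, hence less room for error, and an exact identity (a square plus a manifestly nonnegative curvature term) instead of the paper's inequality chain; your identity is in fact slightly sharper, giving $\rho^2\triangle_\Sigma\log\rho\geq 2n_1^2$ where the paper records only $n_1^2$, and it makes the equality discussion of Remark \ref{rem:hr subh} transparent. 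You are also more careful than the paper about the degenerate locus $r=0$ (the vertical geodesic through $p$), where both the polar coordinates and your adapted frame break down; your continuity argument is exactly what is needed, since that locus meets $\Sigma\setminus\{p\}$ in a set with empty interior and $\triangle_\Sigma\log\rho$ is continuous there because $\rho$ is smooth away from $p$ ($\hr$ being Hadamard, as you note). What the paper's longer route buys is the explicit geodesic-polar metric of $\hr$, which has some independent interest but is not used elsewhere in the argument.
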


Before proving {Proposition \ref{prop:hr subh}}, we will determine the Jacobi fields along a unit speed geodesic in $\hr$. In this section we use the Poincar{\' e} disk model of $\mathbb{H}^2$,
\begin{equation*}
\mathbb{H}^2=\{(u_1,u_2) \in \mathbb{R}^2 | r^2=u_1^2+u_2^2 <1\}.
\end{equation*}
As the product space, $\hr$ has the coordinates $(u_1,u_2,z)$ endowed with the metric
\begin{equation*}
\md \tilde{s}^2 = \frac{4(\md u_1^2 + \md u_2^2)}{(1-r^2)^2} + \md z ^2,
\end{equation*}
where $(u_1,u_2) \in \mathbb{H}^2$ and $z \in \mathbb{R}$.
Let $p$ be a point in $\hr$ and $\gamma$ be a unit speed geodesic in $\hr$ emanating from $p$ with $\gamma(0)=p$ and $\gamma'(0)=v \in T_p \hr$. Since $\hr$ is a homogeneous space, there exists the isometry $\varphi$ of $\hr$ so that $\varphi(p)=0$ and $\md \varphi(v)=(c,0,\sqrt{1-4c^2})=:w$ for some $c \in [0,\frac{1}{2}]$. For convenience denote by $\gamma$ the geodesic $\varphi \circ \gamma$, i.e. $\gamma(0)=0$ and $\gamma'(0)=w$.

To find the geodesic $\gamma$ explicitly, recall the geodesic equation as follows:
\begin{equation} \label{eqn:geod eqn}
\gamma_k''(t)+\sum_{i,j=1}^3 \Gamma_{ij}^k \gamma_i'(t) \gamma_j'(t) =0, \hspace{1cm} k=1,2,3.
\end{equation}
Since the Christoffel symbols of the Riemannian connection is given by
\begin{equation*}
\Gamma_{ij}^k = \sum_{l=1}^3 \frac{1}{2} g^{kl} (g_{il,j} +g_{jl,i} -g_{ij,l}), \hspace{1cm} i,j,k \in \{1,2,3\},
\end{equation*}
we have
\begin{equation} \label{eqn:symbols}
\left\{
    \begin{array}{ll}
    \Gamma_{11}^1 = \Gamma_{21}^2 = \Gamma_{12}^2 = -\Gamma_{22}^1 = \frac{2u_1}{1-r^2},\\
    \Gamma_{22}^2 = \Gamma_{12}^1 = \Gamma_{21}^1 = -\Gamma_{11}^2 = \frac{2u_2}{1-r^2},\\
    \Gamma_{ij}^k = 0 \hspace{5mm} \text{if } 3 \in \{i,j,k\}.
    \end{array}
\right.
\end{equation}
It is a well known fact that the canonical projections of a geodesic in the product Riemannian manifold are also geodesics. In particular, a projection of $\gamma$ onto the horizontal totally geodesic plane in $\hr$ is also a geodesic. Therefore $\gamma_2(t)=0$ because the only geodesics emanating from the origin in $\mathbb{H}^2$ are the rays. Putting (\ref{eqn:symbols}) in (\ref{eqn:geod eqn}), we get
\begin{eqnarray*}
\left\{
    \begin{array}{lll}
    \gamma_1''(t)+\Gamma_{11}^1 (\gamma_1'(t))^2 &=&0,\\
    \gamma_3''(t)&=&0.
    \end{array}
\right.
\end{eqnarray*}
With the given initial conditions, one may obtain as follows:
\begin{equation*}
\gamma(t)=\left(\tanh ct, 0,\sqrt{1-4c^2} t\right).
\end{equation*}

\begin{lem}
The Jacobi field along $\gamma$ with the initial condition $J(0)=(0,0,0)$ and $J'(0)=\omega(0)$ in $\hr$ is given by
\begin{equation} \label{lem:Jacobi}
J(t)=\left( t \omega_1(t), \frac{\sinh 2ct}{2c} \omega_2(t), t \omega_3(t) \right),
\end{equation}
where $\omega(t)=(\omega_1(t),\omega_2(t),\omega_3(t))$ is a parallel vector field along $\gamma$ with $\gamma'(t) \cdot \omega(t) = 0$ and $|\omega(t)| = 1$.
\end{lem}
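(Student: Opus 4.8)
The plan is to integrate the Jacobi equation $\ddot{J}+R(J,\gamma')\gamma'=0$ directly, where $\ddot{J}=\nabla_{\gamma'}\nabla_{\gamma'}J$ is the second covariant derivative along $\gamma$ and $R$ is the curvature tensor of $\hr$. The decisive simplification will come from writing $J$ in a \emph{parallel orthonormal frame} along $\gamma$: in such a frame the covariant derivative $\ddot{J}$ becomes the ordinary second derivative of the component functions, and the Jacobi equation collapses to a decoupled system of second-order linear ODEs with elementary solutions.

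First I would exhibit the parallel frame explicitly. Since $\Gamma_{ij}^k=0$ whenever $3\in\{i,j,k\}$ by (\ref{eqn:symbols}), the field $\partial_z$ is immediately parallel along $\gamma$. A short computation with the remaining symbols, evaluated along the geodesic $\gamma(t)=(\tanh ct,0,\sqrt{1-4c^2}\,t)$ found above (where $u_2\equiv0$, $1-r^2=\cosh^{-2}(ct)$ and $\dot\gamma_1=c\cosh^{-2}(ct)$), gives $\nabla_{\gamma'}\partial_{u_i}=2c\tanh(ct)\,\partial_{u_i}$ for $i=1,2$. Hence the unit fields
\begin{equation*}
e_1(t)=\frac{1}{2\cosh^2 ct}\,\partial_{u_1},\qquad e_2(t)=\frac{1}{2\cosh^2 ct}\,\partial_{u_2}
\end{equation*}
are parallel: the derivative of the factor $(2\cosh^2 ct)^{-1}$ exactly cancels the connection coefficient $2c\tanh(ct)$. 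Thus $\{e_1,e_2,\partial_z\}$ is a parallel orthonormal frame, and in it the tangent reads $\gamma'=2c\,e_1+\sqrt{1-4c^2}\,\partial_z$.

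Next I would compute the curvature term. Because $\hr$ is a Riemannian product whose $\mathbb{R}$-factor is flat, $R$ sees only the $\mathbb{H}^2$-factor of constant curvature $-1$, so $R(X,Y)Z$ depends on the horizontal parts alone and satisfies $R(U,V)W=-(\langle V,W\rangle U-\langle U,W\rangle V)$ for horizontal $U,V,W$. The horizontal part of $\gamma'$ is $2c\,e_1$, whence $R(e_1,\gamma')\gamma'=0$ and $R(\partial_z,\gamma')\gamma'=0$, while $R(e_2,\gamma')\gamma'=-4c^2 e_2$. Writing $J=f_1 e_1+f_2 e_2+f_3\partial_z$ and using that the frame is parallel, the Jacobi equation splits into $f_1''=0$, $f_2''-4c^2 f_2=0$ and $f_3''=0$. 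The conditions $J(0)=0$ and $J'(0)=\omega(0)$ force $f_i(0)=0$ and $f_i'(0)$ equal to the frame components of $\omega$, so $f_1=\omega_1 t$, $f_2=\tfrac{\omega_2}{2c}\sinh(2ct)$ and $f_3=\omega_3 t$. Reverting to the coordinate basis via $e_i=(2\cosh^2 ct)^{-1}\partial_{u_i}$, the single conformal factor common to the $u_1$- and $u_2$-directions cancels between the coordinate components of $J$ and those of $\omega$ (and is absent in the $z$-direction), producing exactly (\ref{lem:Jacobi}).

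I expect the main obstacle to be the verification that $e_1$ and $e_2$ are genuinely parallel, i.e. matching the derivative of $(2\cosh^2 ct)^{-1}$ against the connection coefficient from (\ref{eqn:symbols}) and confirming the cancellation along $\gamma$. Once the parallel frame is secured, the product structure makes the curvature computation transparent and the ODEs immediate; the only remaining care is the bookkeeping of the conformal factor when translating between the orthonormal frame and the coordinate components $\omega_i(t)$ used in the statement.
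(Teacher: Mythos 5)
Your argument is correct, and it arrives at the same decoupled system $f_1''=0$, $f_2''-4c^2f_2=0$, $f_3''=0$ with the same curvature constant $-4c^2$ as the paper, but by a genuinely different and in fact more careful route. The paper computes the coordinate curvature components $R^l_{ijk}$ of $\mathbb{H}^2\times\mathbb{R}$ from the conformal metric and substitutes them into a Jacobi equation written with ordinary second derivatives ${J_l}''$ of the coordinate components; taken literally this omits the Christoffel corrections in the covariant derivative along $\gamma$, and indeed the stated solution $J_1(t)=t\,\omega_1(t)$ does not satisfy ${J_1}''=0$ as an ordinary ODE, since the coordinate component $\omega_1(t)$ of a parallel field carries the nonconstant factor $(2\cosh^2 ct)^{-1}$. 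Your explicit parallel orthonormal frame $\{e_1,e_2,\partial_z\}$ is exactly what legitimizes the reduction to constant-coefficient ODEs: the parallelism check is sound, since along $\gamma$ one has $\Gamma^1_{11}=\Gamma^2_{12}=\frac{2u_1}{1-r^2}=\sinh 2ct$ and the symbols involving $u_2$ vanish by (\ref{eqn:symbols}), so $\nabla_{\gamma'}\partial_{u_i}=c\,\mathrm{sech}^2(ct)\sinh(2ct)\,\partial_{u_i}=2c\tanh(ct)\,\partial_{u_i}$, which the derivative of $(2\cosh^2 ct)^{-1}$ cancels as you claim. Your invariant evaluation of $R(\cdot,\gamma')\gamma'$ via the product structure (the flat factor contributes nothing, and the horizontal part $2c\,e_1$ of $\gamma'$ sees constant curvature $-1$, giving $R(e_2,\gamma')\gamma'=-4c^2e_2$) replaces the paper's computation of $R^l_{ijk}$, where the factor $\left(\frac{2}{1-r^2}\right)^2(\gamma_1')^2=4c^2$ plays the same role. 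Your final bookkeeping, converting the frame solution back to coordinates and noting that the common conformal factor is absorbed into the coordinate components $\omega_i(t)$, recovers (\ref{lem:Jacobi}) exactly and makes explicit the step the paper glosses over. In short: the two proofs share the same ODE skeleton and curvature constant, but the paper buys brevity by conflating coordinate and parallel-frame components, while your version buys rigor at the modest cost of exhibiting and verifying the parallel frame.
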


\begin{proof}
We derive the Riemannian curvature tensor of $\hr$ using that of $\mathbb{H}^2$ as follows:
\begin{eqnarray*}
R_{ijk}^l=
\left\{
    \begin{array}{cl}
    \left(\frac{2}{1-r^2}\right)^2 \left(-\delta_{ik} \delta_{jl} +\delta_{jk} \delta_{il}\right), & i,j,k,l=1,2,\\
    0, & \text{otherwise}.
    \end{array}
\right.
\end{eqnarray*}
Since $\gamma_2(t)=0$, the Jacobi equation
\begin{equation*}
{J_l}''(t)+\sum_{i,j,k=1}^3 R_{ijk}^l {\gamma_i}'(t) {\gamma_k}'(t) J_j(t)=0, \hspace{5mm} l=1,2,3,
\end{equation*}
becomes
\begin{equation*}
\left\{
    \begin{array}{lll}
    {J_1}''(t)&=&0,\\
    {J_2}''(t)+R_{121}^2 \cdot ({\gamma_1}'(t))^2 J_2(t)&=&0,\\
    {J_3}''(t)&=&0.
    \end{array}
\right.
\end{equation*}
Along the geodesic $\gamma$, $R_{121}^2 \cdot \left({\gamma_1}'(t)\right)^2 =-\left(\frac{2}{1-r^2}\right)^2 \cdot \left(c(1-x^2)\right)^2= -4c^2$.
Solving the equation
\begin{equation*}
\left\{
    \begin{array}{lll}
    {J_1}''(t)&=&0,\\
    {J_2}''(t)-4c^2 J_2(t)&=&0,\\
    {J_3}''(t)&=&0
    \end{array}
\right.
\end{equation*}
with the given initial conditions $J(0)=(0,0,0)$ and $J'(0)=\omega(0)$, we can obtain (\ref{lem:Jacobi}).
\end{proof}

\begin{lem} {\rm (\cite{CG1}, Lemma 2)} \label{lem:laplacian}
Let $f$ be a smooth function on an $n$-dimensional Riemannian manifold $M$ and $\Sigma$ an $m$-dimensional submanifold of $M$. Let $\overline{\nabla}$ and $\overline{\triangle}$ be the connection and Laplacian on $M$ respectively, and $\triangle_\Sigma$ the Laplacian on $\Sigma$. If $H$ is the mean curvature vector of $\Sigma$ in $M$, then
\begin{equation} \label{eqn:laplacian}
\triangle_\Sigma f = (\overline{\triangle} f) | \Sigma + H f - \sum_{\alpha=m+1}^{n} \overline{\nabla}^2 f(\overline{e}_\alpha,\overline{e}_\alpha),
\end{equation}
where $H f$ is the directional derivative of $f$ in the direction of the mean curvature vector $H$ and $\overline{e}_{m+1},\cdots,\overline{e}_n$ are orthonormal vectors which are perpendicular to $\Sigma$.
\end{lem}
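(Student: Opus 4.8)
The plan is to verify (\ref{eqn:laplacian}) pointwise by a direct computation in an adapted frame. First I would fix a point $p \in \Sigma$ and choose a local orthonormal frame $\overline{e}_1,\ldots,\overline{e}_n$ of $M$ near $p$ whose first $m$ vectors $e_i := \overline{e}_i$ (for $1 \le i \le m$) are tangent to $\Sigma$ and whose remaining vectors $\overline{e}_{m+1},\ldots,\overline{e}_n$ are the normal vectors appearing in the statement. Because the ambient Hessian $\overline{\nabla}^2 f$ is a tensor, the right-hand side of (\ref{eqn:laplacian}) is independent of how the frame is extended off $\Sigma$, so it suffices to compare both sides at $p$ in this frame.

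Next I would expand the two Laplacians from their definitions. Writing $\nabla^\Sigma$ for the Levi-Civita connection of $\Sigma$,
\begin{equation*}
\triangle_\Sigma f = \sum_{i=1}^m \bigl( e_i(e_i f) - (\nabla^\Sigma_{e_i} e_i) f \bigr),
\end{equation*}
while, from $\overline{\nabla}^2 f(X,Y) = X(Yf) - (\overline{\nabla}_X Y) f$, the ambient Laplacian restricted to $\Sigma$ splits as
\begin{equation*}
\overline{\triangle} f = \sum_{i=1}^m \overline{\nabla}^2 f(e_i, e_i) + \sum_{\alpha=m+1}^n \overline{\nabla}^2 f(\overline{e}_\alpha, \overline{e}_\alpha).
\end{equation*}

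The crux is the Gauss formula $\overline{\nabla}_X Y = \nabla^\Sigma_X Y + h(X,Y)$ for $X,Y$ tangent to $\Sigma$, where $h(X,Y) = (\overline{\nabla}_X Y)^\perp$ is the second fundamental form. Subtracting the tangential part of the ambient Hessian from the intrinsic Laplacian, I expect the second-order terms $e_i(e_i f)$ to cancel and only the normal contributions to survive:
\begin{equation*}
\triangle_\Sigma f - \sum_{i=1}^m \overline{\nabla}^2 f(e_i, e_i) = \sum_{i=1}^m \bigl( (\overline{\nabla}_{e_i} e_i) - (\nabla^\Sigma_{e_i} e_i) \bigr) f = \sum_{i=1}^m h(e_i, e_i)\, f = H f,
\end{equation*}
using $H = \sum_{i=1}^m h(e_i,e_i)$. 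Substituting $\sum_{i=1}^m \overline{\nabla}^2 f(e_i, e_i) = \overline{\triangle} f - \sum_{\alpha=m+1}^n \overline{\nabla}^2 f(\overline{e}_\alpha, \overline{e}_\alpha)$ then gives exactly (\ref{eqn:laplacian}).

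There is no genuine obstacle here; the argument is local and elementary. The only steps that need care are the bookkeeping conventions: taking $H = \tr h$ (the unnormalized trace of the second fundamental form) so that the middle term reads precisely $Hf$, and confirming that $(\overline{\nabla}_{e_i} e_i) f - (\nabla^\Sigma_{e_i} e_i) f$ collapses to $h(e_i,e_i) f$ without leaving stray tangential terms. This is the content of Lemma~2 in \cite{CG1}, to which I would refer for the detailed verification.
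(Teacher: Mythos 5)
Your proof is correct, and it is the standard argument behind this lemma: the paper itself gives no proof, simply citing \cite{CG1}, and your adapted-frame computation---splitting $\overline{\triangle} f$ into tangential and normal Hessian traces and using the Gauss formula to convert $\sum_{i=1}^m \bigl((\overline{\nabla}_{e_i} e_i) - (\nabla^\Sigma_{e_i} e_i)\bigr)f$ into $Hf$---is exactly the verification that citation points to. Your bookkeeping is also right where it matters: with $H = \tr h$ (unnormalized trace of $h(X,Y) = (\overline{\nabla}_X Y)^\perp$) the middle term is precisely $Hf$, the tensoriality of $\overline{\nabla}^2 f$ justifies working pointwise in any adapted frame, and the frame-dependence of the individual terms $(\overline{\nabla}_{e_i}e_i)f$ and $(\nabla^\Sigma_{e_i}e_i)f$ drops out of their difference, so no stray tangential terms survive.
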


{\bf Proof of Proposition \ref{prop:hr subh}. }
Let $\gamma$ be a unit speed geodesic emanating from $0$ and $S_\rho(0)$ be a geodesic sphere of radius $\rho$ centered at $0$ in $\hr$. It is convenient to use the exponential coordinates $(\rho, \phi, \theta)$, where $\phi$ is the angle between $\gamma'(0)$ and the $z$-axis in $T_0\hr$ and $\theta$ is the angle around the $z$-axis. Note that $\cos \phi =\sqrt{1-4c^2}$ and hence $2c=\sin \phi$. In terms of $\phi$ one can rewrite $\gamma(t)$ and $J(t)$.

There are globally defined coordinate vector fields corresponding to these coordinates. Now define new vector fields $\{V_1,V_2,V_3\}$ to be parallel to the above-mentioned coordinate vector fields on a neighborhood of $q=\gamma(\rho) \in S_\rho(0)$ such that
\begin{displaymath}
V_1(q)=\gamma'(\rho)=\frac{\partial}{\partial \rho},\hspace{3mm} V_i \cdot V_j=0
\end{displaymath}
and
\begin{displaymath}
\left|V_1\right|=1,\hspace{3mm} \left|V_2\right|=\rho,\hspace{3mm} \left|V_3\right|=\frac{\sinh(\rho \sin \phi)}{\sin \phi}.
\end{displaymath}

By {Lemma \ref{lem:laplacian}} and the minimality of $\Sigma$, (\ref{eqn:laplacian}) yields
\begin{equation} \label{eqn:laplcian2}
\triangle_\Sigma \log \rho = \overline{\triangle} \log \rho - \overline{\nabla}^2 \log \rho(n,n) = \tr \overline{\nabla}^2 \log \rho - \overline{\nabla}^2 \log \rho(n,n),
\end{equation}
where $n$ is the unit normal vector field of $\Sigma$ in $\hr$.

Let $\{h_{ij}\}$ and $\{\widetilde{\Gamma}_{ij}^k\}$ be the metric and Christoffel symbols corresponding to the vector fields $\{V_i\}$. If $F$ is a smooth function on $\hr$, then the Hessian of $F$ satisfies
\begin{equation} \label{eqn:Hessian}
\overline{\nabla}^2 F (V_i,V_j)=\sum_{k=1}^3 \frac{1}{{\sqrt{h_{ii}} \sqrt{h_{jj}}}}(F_{ij} - \widetilde{\Gamma}_{ij}^k F_k ).
\end{equation}
Rewrite the metric $\{h_{ij}\}$ by the matrix form,
\begin{equation*}
    (h_{ij})=
    \left(
    \begin{array}{ccc}
    1 & 0 & 0 \\
    0 & \rho^2 & 0 \\
    0 & 0 & \frac{\sinh^2(\rho \sin \phi)}{\sin^2 \phi}\\
    \end{array}
    \right).
\end{equation*}
In case that $F=\log \rho$, two out of the three directional derivatives vanish. Substituting $\{h_{ij}\}$ into (\ref{eqn:Hessian}), we get
\begin{equation*}
    \overline{\nabla}^2 \log \rho (V_i,V_j)=
    \left\{
    \begin{array}{ll}
      \frac{1}{h_{11}} \left((\log \rho)_{11} - \widetilde{\Gamma}_{11}^1 (\log \rho)_1 \right), & i=j=1,\\
      \frac{1}{\sqrt{h_{ii}} \sqrt{h_{jj}}} \left(- \widetilde{\Gamma}_{ij}^1 (\log \rho)_1 \right), & {\text{otherwise}}.
    \end{array}
    \right.
\end{equation*}
Hence it is enough to compute the terms $\{\widetilde{\Gamma}_{ij}^1\}$. If $i \neq j$ then $\widetilde{\Gamma}_{ij}^1=0$. The only non-zero terms are
\begin{equation*}
\left\{
\begin{array}{lll}
    \widetilde{\Gamma}_{22}^1&=-\frac{1}{2} h_{22,1}&=- \rho,\\
    \widetilde{\Gamma}_{33}^1&=-\frac{1}{3} h_{33,1}&=- \frac{\sinh(\rho \sin \phi) \cosh(\rho \sin \phi)}{\sin \phi}.
\end{array}
\right.
\end{equation*}
Therefore
\begin{equation*}
\begin{array}{ll}
    \overline{\nabla}^2 \log \rho (V_1,V_1) &= \frac{1}{h_{11}} \left((\log \rho)_{11} - \widetilde{\Gamma}_{11}^1 (\log \rho)_1 \right) = -\frac{1}{\rho^2},\\
    \overline{\nabla}^2 \log \rho (V_2,V_2)&=-\frac{1}{h_{22}} \widetilde{\Gamma}_{22}^1 (\log \rho)_1 = \frac{1}{\rho^2},\\
    \overline{\nabla}^2 \log \rho (V_3,V_3)&=-\frac{1}{h_{33}} \widetilde{\Gamma}_{33}^1 (\log \rho)_1 = \frac{1}{\rho} \sin\phi \coth(\rho \sin \phi).
\end{array}
\end{equation*}
In conclusion, the Hessian of $\log \rho$ in $\hr$ is obtained
\begin{equation} \label{eqn:Hessian2}
    \overline{\nabla}^2 \log \rho=\frac{1}{\rho^2} \cdot
    \left(
      \begin{array}{ccc}
        -1 & 0 & 0 \\
        0 & 1 & 0 \\
        0 & 0 & \rho \sin\phi \coth(\rho \sin \phi) \\
      \end{array}
    \right).
\end{equation}
Putting $n=(n_1,n_2,n_3)$ and applying (\ref{eqn:laplcian2}) together with (\ref{eqn:Hessian2}),
\begin{eqnarray*}
    \rho^2 \triangle_\Sigma \log \rho &=& \rho^2 \cdot \left( \tr \overline{\nabla}^2 \log \rho - \overline{\nabla}^2 \log \rho(n,n) \right)\\
    &=& (1-n_3^2) \rho \sin \phi \coth(\rho \sin \phi)  - (n_2^2-n_1^2).
\end{eqnarray*}
Now we claim $\rho \sin \phi \coth(\rho \sin \phi)\geq 1$ . If we define $f(s)=s\coth s$, $0 \leq s \leq \rho$ then $f'(s)=\coth s - \frac{s}{\sinh^2 s}= \frac{1}{\sinh^ 2 s} (\sinh s \cdot \cosh s -s)$. Since $\frac{\md}{\md s}(\sinh s \cdot \cosh s -s)=\cosh {2s} -1 \geq 0$ and $\lim_{s\rightarrow0} (\sinh s \cdot \cosh s -s) =0$, $f$ is a monotonically increasing function. Since $\lim_{s\rightarrow0} f(s)= \lim_{s\rightarrow0} s \cdot \frac{e^s +e^{-s}}{e^s -e^{-s}} =1$, one can conclude $f(s)\geq1$.
Hence
\begin{equation*}
    \rho^2 \triangle_\Sigma \log \rho \geq 1+n_1^2 - (n_2^2+n_3^2) \geq 1+n_1^2 - |n|^2 = n_1^2 \geq 0. \qed
\end{equation*}

\begin{rem} \label{rem:hr subh}
{\rm If the equality holds in {Proposition \ref{prop:hr subh}} at a point $q \in \Sigma$ then either $n=(0,0,1)$ or both $n_1$ and $\sin \phi$ at $q$ vanish. In particular, if $\Sigma$ is a totally geodesic vertical plane containing $p$ and $q$, then $n=(0,0,1)$ at any $q \in \Sigma$ and thus $\triangle_{\Sigma} \log \rho \equiv 0$ on $\Sigma$, and vice versa. Note that $\sin \phi$ vanishes at $q$ if and only if $q$ lies in the vertical geodesic through $p$ in $\hr$.}
\end{rem}

\begin{df}
Let $\Pi_0$ be a totally geodesic vertical plane in $\hr$ such that $\Pi_0=\{(u_1,u_2,z) \in \hr |u_2=0\}$. A surface $\Sigma$ in $\hr$ is a {\it horizontal graph} of a function $f$ over $\Pi$ if
\begin{displaymath}
\Sigma=\left\{(u_1,u_2,z) \in \hr|\varphi^2=f\left(\varphi^1, \varphi^3\right)\right\},
\end{displaymath}
where $\varphi=(\varphi^1, \varphi^2, \varphi^3)$ is an isometry in $\hr$ such that $\varphi(\Pi)=\Pi_0$.
\end{df}

\begin{df} {\label{def:regular}}
Let $\Sigma$ be a complete minimal surface in $\hr$. $\Sigma$ is said to be {\it vertically regular at infinity} in $\hr$ if there is a compact subset $K \subset \hr$ such that
 \begin{enumerate}
   \item[1)] $\Sigma \sim K$ consists of $k$ components $\Sigma_1,\cdots,\Sigma_k$;
   \item[2)] each $\Sigma_i$ is the horizontal graph of a function $f_i$ over the exterior of a bounded region in some totally geodesic vertical plane $\Pi_i \simeq \mathbb{H} \times \mathbb{R}$;
   \item[3)] each $f_i$ has the following asymptotic behavior for $r$ large and $\alpha>0$:
   \begin{equation*}
    f_i\rightarrow 0, \hspace{3mm} \partial_{x_i} f_i=O \left(\frac{1}{r^{\alpha}} \right)\rightarrow 0 \hspace{3mm} \text{and} \hspace{3mm} \partial_{z_i} f_i=O \left(\frac{1}{\sinh^{1+\alpha} r} \right)\rightarrow 0,
   \end{equation*}
   as $r \rightarrow \infty$, where $x_i,z_i$ are the
   coordinates on $\Pi_i \simeq \mathbb{H} \times \mathbb{R}$
   and $r$ is the distance from $(0,0) \in \Pi_i$.
 \end{enumerate}
\end{df}

We call these $\Sigma_i$ the ends of $\Sigma$.

\begin{rem}
{\rm Schoen \cite{S} defines the following. A complete minimal surface $\Sigma \subset \mathbb{R}^3$ is said to be regular at infinity if there is a compact subset $K \subset \Sigma$ such that $\Sigma \sim K$ consists of $r$ components $\Sigma_1,\cdots,\Sigma_r$ such that each $\Sigma_i$ is the graph of a function $f_i$ with bounded slope over the exterior of a bounded region in some plane $\Pi_i$. Moreover, if $x_1$ and $x_2$ are coordinates in $\Pi_i$, we require the $f_i$ have the following asymptotic behavior for $r=|x|$ large:
\begin{equation*}
    f_i(x)=a\log r +b+\frac{c_1 x_1}{r^2} +\frac{c_2 x_2}{r^2}+O(r^{-2})
\end{equation*}
for constants $a$, $b$, $c_1$, $c_2$ depending on $i$.

As the distance $r$ from the origin, $(0,0) \in \Pi_i$, goes to infinity, $f_i$ is dominated by $\log r$. It comes from the profile curve of an end of a catenoid in $\mathbb{R}^3$. Note that $\log r \rightarrow \infty$ as $r \rightarrow \infty$, so $\Sigma$ goes apart from any plane parallel to $\Pi_i$. However, since $\frac{\log r}{r} \rightarrow 0$ as $r \rightarrow \infty$, $\Sigma$ tends to approach a plane. In other words the radial projection onto $\mathbb{S}^2$ of the intersection of $\Sigma$ and a geodesic sphere of radius $\rho$ converges uniformly as $\rho \rightarrow \infty$ to an equator, with multiplicities, of $\mathbb{S}^2$.\\

But in $\hr$ we have a different situation as follows:
\begin{itemize}
\item[] $\hr$ is not isotropic and homotheties are not isometries. So the behaviors of the components of a minimal surface outside a compact set in $\hr$ are different depending on whether they are vertical, horizontal, or mixed. In this section we deal with only the vertical cases.
\end{itemize}
}
\end{rem}

The following theorem is the main result of Section \ref{sect:hr}.

\begin{thm}
Let $\Sigma$ be a proper minimal surface in $\hr$. If $\Sigma$ is vertically regular at infinity in $\hr$ and has two ends, then $\Sigma$ is embedded.
\end{thm}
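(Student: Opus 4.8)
The plan is to prove that the density satisfies $\Theta_\Sigma(p)<2$ at every $p\in\Sigma$, which forces embeddedness because any self-intersection point would carry density at least $2$. The engine is the subharmonicity of $\log\rho$ from Proposition \ref{prop:hr subh}, run as the monotonicity/Green's-function argument of Proposition \ref{prop:density estimate}, but with the two-dimensional potential $\log\rho$ playing the role of the Green's function $G$. First I would fix $p\in\Sigma$, let $\rho$ be the distance from $p$ in $\hr$, and integrate $\triangle_\Sigma\log\rho\ge 0$ over $\Sigma\cap(B_R(p)\setminus B_\epsilon(p))$, which is compact by properness. The divergence theorem gives, with $\nu$ the outward conormal,
\begin{equation*}
0\le\int_{\Sigma\cap\partial B_\epsilon(p)}\frac{1}{\rho}\frac{\partial\rho}{\partial\nu}+\int_{\Sigma\cap\partial B_R(p)}\frac{1}{\rho}\frac{\partial\rho}{\partial\nu}.
\end{equation*}
As $\epsilon\to0$ the first term tends to $-2\pi\,\Theta_\Sigma(p)$, since $\frac{\partial\rho}{\partial\nu}\to-1$ and $\length(\Sigma\cap\partial B_\epsilon(p))\sim 2\pi\epsilon\,\Theta_\Sigma(p)$; using $\frac{\partial\rho}{\partial\nu}\le|\nabla_\Sigma\rho|\le1$ and $\rho\equiv R$ on $\partial B_R(p)$, I obtain
\begin{equation*}
2\pi\,\Theta_\Sigma(p)\le\frac{1}{R}\,\length(\Sigma\cap\partial B_R(p))\qquad\text{for all large }R.
\end{equation*}

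The main step, and the one I expect to be the real obstacle, is the asymptotic evaluation $\lim_{R\to\infty}\frac{1}{R}\length(\Sigma\cap\partial B_R(p))=4\pi$. For $R$ large the compact set $K$ of Definition \ref{def:regular} lies inside $B_R(p)$, so $\Sigma\cap\partial B_R(p)$ is the disjoint union of $\Sigma_1\cap\partial B_R(p)$ and $\Sigma_2\cap\partial B_R(p)$. Each end $\Sigma_i$ is a horizontal graph, with $f_i\to0$ and decaying derivatives, over the exterior of a bounded region in a totally geodesic vertical plane $\Pi_i$. The key geometric input is that such a $\Pi_i$ is intrinsically flat, so $\Pi_i\cap\partial B_R(p)$ is a closed curve whose intrinsic radius lies within a bounded range of $R$ and whose length is therefore $2\pi R+O(1)$. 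The decay estimates of Definition \ref{def:regular}(3) then guarantee that $\Sigma_i$ is $C^1$-asymptotic to $\Pi_i$, so $\length(\Sigma_i\cap\partial B_R(p))=2\pi R+o(R)$ as well; summing over the two ends produces the coefficient $4\pi$. Controlling the error terms from the graph deviation $f_i$ through the prescribed decay rates, and verifying that $\partial B_R(p)$ meets each end in a single almost-circular curve of the predicted length, is the technical heart of the argument. Letting $R\to\infty$ in the displayed inequality then yields $\Theta_\Sigma(p)\le2$ for every $p$.

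Finally I would rule out equality. If $\Theta_\Sigma(p)=2$ for some $p$, then all the inequalities above become equalities in the limit, forcing $\int_{\Sigma\setminus\{p\}}\triangle_\Sigma\log\rho=0$; since the integrand is nonnegative, $\triangle_\Sigma\log\rho\equiv0$ on $\Sigma$. By Remark \ref{rem:hr subh} this occurs only when $\Sigma$ is a totally geodesic vertical plane through $p$ (away from the vertical geodesic through $p$ the equality case forces the vertical normal, and connectedness propagates this everywhere). But such a plane has a single end and density $1$, contradicting both the two-ends hypothesis and $\Theta_\Sigma(p)=2$. Hence $\Theta_\Sigma(p)<2$ for all $p\in\Sigma$, and therefore $\Sigma$ is embedded.
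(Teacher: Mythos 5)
Your overall strategy --- subharmonicity of $\log\rho$ from Proposition \ref{prop:hr subh}, a divergence-theorem density estimate with the two ends contributing $2\pi$ each, and Remark \ref{rem:hr subh} to force strictness --- is exactly the paper's, but your pivotal step fails as justified. You integrate over geodesic balls and need $\length(\Sigma\cap\partial B_R(p))=4\pi R+o(R)$, which you derive from the claim that Definition \ref{def:regular}(3) makes $\Sigma_i$ ``$C^1$-asymptotic'' to $\Pi_i$. That claim conflates coordinate smallness with metric smallness: $f_i$ is the $u_2$-\emph{coordinate} of the graph in the Poincar\'e model, and the ambient distance from the end to $\Pi_i$ is $f_i$ amplified by the conformal factor $\tfrac{2}{1-r^2}\sim 1+\cosh(\text{horizontal distance})$, which grows exponentially along the horizontal direction of the end. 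Definition \ref{def:regular} gives $f_i\to0$ with \emph{no rate} and $\partial_{x_i}f_i=O(r^{-\alpha})$ only polynomially, and neither beats $\cosh r$; so as stated you have no control of the metric position, or the tangent planes, of $\Sigma_i$ near the horizontal part of $\partial B_R(p)$, hence no length comparison there. (A secondary slip: ``the curve lies in an annulus of radii $R-C$ and $R$, therefore its length is $2\pi R+O(1)$'' is a non sequitur for a general curve; it holds for $\Pi_i\cap\partial B_R(p)$ because that level curve is convex in the flat coordinates of $\Pi_i\simeq\mathbb{R}^2$, a fact you neither state nor prove, and no such convexity is available for the perturbed curve on $\Sigma_i$ even granting $C^1$-closeness --- one also needs quantified transversality of $\Sigma_i$ with the sphere.)

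The paper avoids precisely this difficulty by never intersecting $\Sigma$ with geodesic spheres: it exhausts $\hr$ by $K_r^h=K\cup C_r^h(p_1)\cup C_r^h(p_2)$, where $C_r^h(p_i)$ is a thin horizontal slab over the intrinsic circle $S_r(p_i)\subset\Pi_i$ (so $\length(S_r(p_i))=2\pi r$ exactly, $\Pi_i$ being flat). Then $\Sigma\cap\partial C_r^h(p_i)$ is literally a horizontal graph over $S_r(p_i)$, admits an explicit parametrization, and the two \emph{asymmetric} decay rates of Definition \ref{def:regular}(3) are calibrated to exactly this computation: the exponential rate $\partial_{z_i}f_i=O(\sinh^{-(1+\alpha)}r)$ is consumed by the factor $1+\cosh r^*$, and the polynomial rate $\partial_{x_i}f_i=O(r^{-\alpha})$ by the bounded ratio $\tfrac{1+\cosh r^*}{1+\cosh r}$, giving $\tfrac1r\length(\Sigma\cap\partial C_r^h(p_i))\to2\pi$; the inequality $\tfrac1\rho\le\tfrac1r$ (from $\rho\ge\dist(p_i,q)\ge r$ on the far ends) then replaces your use of $\rho\equiv R$ on the sphere, and strictness comes from $\int_{\Sigma\cap K_r^h\sim B_\epsilon(p)}\triangle_\Sigma\log\rho\ge\delta_0>0$, essentially your closing argument. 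Your sphere route is likely repairable --- integrating $\partial_{z_i}f_i$ vertically from infinity upgrades ``$f_i\to0$'' to $|f_i|=O(re^{-(1+\alpha)r})$, which does beat the conformal amplification --- but that integration argument, plus the convexity and transversality points above, is exactly the content missing from your proposal, so as written the step you yourself flag as the technical heart is a genuine gap.
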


\begin{proof}
Let $p$ be a point in $\Sigma$ and $\rho(\cdot)=\dist(p,\cdot)$ in $\hr$. Since $\Sigma$ is proper in $\hr$, though $p$ is not in $K$, we can find new compact subset $\widetilde{K} \subset \hr$ satisfying all conditions in Definition \ref{def:regular}. For convenience, denote $\widetilde{K}$ by $K$, i.e. without loss of generality we may assume that $p \in K$. Since $\hr$ is homogeneous, we may also assume that $p=(0,0,0) \in \hr$.

By definition, each $\Sigma_i$ is the horizontal graph of the function $f_i$ over the exterior of a bounded region in $\Pi_i$. Let $x_i,z_i$ be the coordinates on $\Pi_i \simeq \mathbb{H} \times \mathbb{R}$ and let $p_i=(0,0)$ with respect to this coordinates. Then we can assume that $p_i \in \mathbb{H}^2 \times \{0\}$. Let $\varphi_i$ be the isometry in $\hr$ such that $\varphi_i(\Pi_i)=\Pi_0$, $\varphi_i(p_i)=p \in \hr$ and $\varphi_i$ preserves $\mathbb{R}$-axis. Let $S_r(p_i)$ be the geodesic circle of radius $r$ centered at $p_i$ on $\Pi_i$. Define $C_r^h(p_i)$ to be
\begin{equation*}
C_r^h(p_i):=\varphi_i^{-1}(\{(u_1,u_2,z) \in \hr|(u_1,0,z) \in \varphi_i(S_r(p_i)),u_2 \in [-h,h]\}).
\end{equation*}
Put $K_r^h=K \cup {C_r^h(p_1)} \cup {C_r^h(p_2)}$. The fact that $f_i\rightarrow 0$ as $r\rightarrow \infty$ implies that for sufficiently large $r$ there is $h>0$, which does not depend on $r$, such that $\Sigma \sim K_r^h$ consists of only two connected components of $\Sigma$. If necessary, we can enlarge $K$ since $\Sigma$ is proper. Furthermore we consider ${\Sigma \cap \partial C_r^h(p_i)}$ as a horizontal graph over $S_r(p_i)$.

Integrating $\triangle_\Sigma \log \rho$ in $\Sigma \cap K_r^h \sim B_\epsilon(p)$ and applying the divergence theorem gives
\begin{equation*}
    \int_{\Sigma \cap K_r^h \sim B_\epsilon(p)} \triangle_\Sigma \log \rho
    =\int_{\Sigma \cap \partial C_r^h(p_1)} \frac{1}{\rho} \frac{\partial \rho}{\partial \nu}
    +\int_{\Sigma \cap \partial C_r^h(p_2)} \frac{1}{\rho} \frac{\partial \rho}{\partial \nu}
    +\int_{\Sigma \cap \partial B_\epsilon(p)} \frac{1}{\rho} \frac{\partial \rho}{\partial \nu},
\end{equation*}
where $\nu$ is the outward unit conormal vector to $\Sigma \cap K_r^h \sim B_\epsilon(p)$.

Near $p$, $\sinh \rho \rightarrow \rho$ uniformly and $\Sigma$ is close to $T_p \Sigma$. Hence
\begin{equation*}
\vol(\Sigma \cap \partial B_\epsilon(p)) \rightarrow 2 \pi \epsilon \Theta_{\Sigma} (p)
\end{equation*}
and $\frac{\partial \rho}{\partial \nu} \rightarrow -1$ uniformly as $\epsilon \rightarrow 0$. So
\begin{equation*}
2 \pi \Theta_{\Sigma} (p) \leq -\int_{\Sigma \cap \partial B_\epsilon(p)} \frac{1}{\rho} \frac{\partial \rho}{\partial \nu}.
\end{equation*}
Therefore
\begin{equation} \label{eqn:hr0}
    2 \pi \Theta_{\Sigma}(p) \leq \int_{\Sigma \cap \partial C_r^h(p_1)} \frac{1}{\rho} \frac{\partial \rho}{\partial \nu}
    +\int_{\Sigma \cap \partial C_r^h(p_2)} \frac{1}{\rho} \frac{\partial \rho}{\partial \nu} - \int_{\Sigma \cap K_r^h \sim B_\epsilon(p)} \triangle_\Sigma \log \rho.
\end{equation}

Since $\dist(p_i,q) \leq \dist(p,q)$ for $q \in \Sigma_i$ sufficiently far from $p$, we have, $r \leq \rho$ and hence
\begin{eqnarray*}
    \int_{\Sigma \cap \partial C_r^h(p_i)} \frac{1}{\rho} \frac{\partial \rho}{\partial \nu} &\leq&
    \int_{\Sigma \cap \partial C_r^h(p_i)} \frac{1}{\rho}\\
    &\leq& \int_{\Sigma \cap \partial C_r^h(p_i)} \frac{1}{r} = \frac{1}{r} \length({\Sigma \cap \partial C_r^h(p_i)})\nonumber.
\end{eqnarray*}
Parameterize ${\Sigma \cap \partial C_r^h(p_i)}$ by $t$, $0\leq t \leq 2\pi$, as follows:
\begin{displaymath}
\left(x(t),f_i(x(t),z(t)),z(t)\right)=\left(\tanh \frac{1}{2} r \cos t, f_i(x(t),z(t)),r \sin t\right),
\end{displaymath}
where $x_i(t)$ and $z_i(t)$ is denoted by $x(t)$ and $z(t)$, respectively, for convenience. Then we can compute directly
\begin{eqnarray*}
&&\frac{1}{r} \length({\Sigma \cap \partial C_r^h(p_i)})\\&=&\frac{1}{r} \int_0^{2\pi} \sqrt{
\frac{x'(t)^2+\left(\partial_x f_i \cdot x'(t)+\partial_z f_i \cdot z'(t)\right)^2}{\left(\frac{1-{x^*}^2}{2}\right)^2} +z'(t)^2
}\\
&=&\int_0^{2\pi} \sqrt{
\left(\frac{\frac{1-{x}^2}{2} \cdot \sin t}{\frac{1-{x^*}^2}{2}}\right)^2+
\left(\frac{\frac{1-{x}^2}{2} \cdot \partial_x f_i \cdot (-\sin t)+\partial_z f_i \cdot \cos t}{\frac{1-{x^*}^2}{2}}
\right)^2+\cos^2 t
},
\end{eqnarray*}
where $x^*$ is determined by an orthogonal projection of the graph onto $\mathbb{H}^2$.
From the vertical regularity of $\Sigma$
\begin{eqnarray*}
    \left|\partial_z f_i\right|\cdot \left(\frac{1-{x^*}^2}{2}\right)^{-1}
    &=& O \left(\frac{1}{\sinh^{1+\alpha} r} \right)\cdot \left( \frac{{x^*}}{\sinh r^*} \right)^{-1}\\
    &=& O \left(\frac{1}{\sinh^{1+\alpha} r} \right)\cdot \left( 1+{\cosh r^*} \right) \longrightarrow 0,\\
    \left|\partial_x f_i\right|\cdot \left(\frac{1-{x}^2}{1-{x^*}^2}\right)
    &=& O \left(\frac{1}{r^{\alpha}} \right)\cdot \left(\frac{1+{\cosh r^*}}{1+{\cosh r}}\right) \longrightarrow 0,
\end{eqnarray*}
as $r\rightarrow \infty$, where $r^*=\log \frac{1+x^*}{1-x^*}$ the distance from $p_i$ in $\mathbb{H}^2$.

It implies that not only ${\Sigma \cap \partial C_r^h(p_i)}$ converges to $S_r(p_i)$ as a set but also the tangent vectors of ${\Sigma \cap \partial C_r^h(p_i)}$ converge to those of $S_r(p_i)$ uniformly. Therefore
\begin{equation} \label{eqn:hr2}
     \frac{1}{r} \length({\Sigma \cap \partial C_r^h(p_i)})\rightarrow \frac{1}{r} \length(S_r(p_i)) =2\pi,
\end{equation}
as $r\rightarrow \infty$. Note that the last equality holds because $\Pi_i \simeq \mathbb{H}\times \mathbb{R}$ is isometric to $\mathbb{R}^2$. Then (\ref{eqn:hr2}) implies that for every $\delta>0$, there exists $R$ such that
\begin{equation*}
    \left| \int_{\Sigma \cap \partial C_r^h(p_1)} \frac{1}{\rho} \frac{\partial \rho}{\partial \nu}
    +\int_{\Sigma \cap \partial C_r^h(p_2)} \frac{1}{\rho} \frac{\partial \rho}{\partial \nu} -4\pi \right| < \delta \hspace{3mm} \text{if } r>R.
\end{equation*}
{Remark \ref{rem:hr subh}} implies that
\begin{displaymath}
\int_{\Sigma \cap K_r^h \sim B_\epsilon(p)} \triangle_\Sigma \log \rho
 \end{displaymath}
is strictly positive since $\Sigma$ can not be a union of two totally geodesic vertical planes. Then there is $\delta_0 >0$ which does not depend on $r$ such that
\begin{equation*}
    \int_{\Sigma \cap K_r^h \sim B_\epsilon(p)} \triangle_\Sigma \log \rho > \delta_0.
\end{equation*}
If we take $\delta< \frac{\delta_0}{2}$ then (\ref{eqn:hr0}) deduces
\begin{equation*}
    2 \pi \Theta_{\Sigma}(p) < 4\pi +\delta  -\delta_0 < 4\pi +\frac{\delta_0}{2} -\delta_0 < 4\pi.
\end{equation*}
This completes the proof.
\end{proof}

\noindent Sung-Hong Min\\
Korea Institute for Advanced Study, 207-43 Cheongnyangni 2-Dong, Dongdaemun-Gu, Seoul 130-722, Korea\\
{\tt e-mail:shmin@kias.re.kr}
\end{document}